\newtheorem{thm}{Theorem}[section]
\newtheorem{prop}[thm]{Proposition}
\newtheorem{cor}[thm]{Corollary}
\def\be#1\ee{\begin{equation}#1\end{equation}}
\newcommand{\bea}{\begin{eqnarray}}
\newcommand{\eea}{\end{eqnarray}}
\newcommand{\beaa}{\begin{eqnarray*}}
\newcommand{\eeaa}{\end{eqnarray*}}
\newcommand{\bei}{\begin{itemize}}
\newcommand{\eei}{\end{itemize}}
\newcommand{\bee}{\begin{enumerate}}
\newcommand{\eee}{\end{enumerate}}
\def\P{{\mathbb{P}}}
\def\R{\mathbb{R}}
\def\E{\mathbb{E}\,}
\newcommand{\eps}{\varepsilon}
\newenvironment{proof}[1][] {\noindent {\bf Proof#1:} }{\hspace*{\fill}$\square$\medskip\par}
\def\toalsur{\stackrel{\textrm{a.s.}}{\longrightarrow}}
\def\toprob{\stackrel{\P}{\to}}
\def\tomean{\stackrel{L_q}{\longrightarrow}}
\def\C{{\mathcal C}}
\def\CC{{\mathbb C}}
\def\Erfc{\textrm{Erfc}}
\def\V{{\mathbb V}}
\def\tW{\widetilde W}
\def \=L{\ {\buildrel\hbox{\scriptsize d }\over =}\ }
\def\iw{I_W}
\def\iwz{I_W^0}
\def\itwz{I_{{\tW_T}}^0}
\begin{document}

\title{\bf Energy of Taut Strings Accompanying Wiener Process}
\author{
   Mikhail Lifshits\footnote{St.Petersburg State University, Russia, Stary 
   Peterhof, Bibliotechnaya pl.,2, 
   email {\tt mikhail@lifshits.org} and MAI, Link\"oping University.} 
   \and Eric Setterqvist\footnote{MAI, Link\"oping University, 58183 
   Link\"oping, Sweden,  email \ {\tt eric.setterqvist@liu.se}.  
}}
\date{\today} 

\maketitle

\begin{abstract}
Let $W$ be a Wiener process. For $r>0$ and $T>0$ let
$\iw(T,r)^2$ denote the minimal value of the energy
$\int_0^T h'(t)^2 dt$ 
taken among all absolutely continuous functions $h(\cdot)$ 
defined on $[0,T]$, starting at zero and satisfying
\[
   W(t)-r \le h(t)\le W(t)+r,\qquad 0\le t \le T.
\]    
The function minimizing energy is a taut string, a classical object
well known in Variational Calculus, in Mathematical Statistics, and in a broad 
range of applications. 
We show that there exists a constant $\C\in (0,\infty)$ such that for any $q>0$
\[
    \frac{r} { T^{1/2}}\,  \iw(T,r) \tomean \C, 
    \qquad \textrm{as } \frac{r}{T^{1/2}}\to 0, 
\]   
and for any fixed $r>0$, 
\[
   \frac{r} { T^{1/2}}\,  \iw(T,r) \toalsur \C, 
   \qquad \textrm{as } T\to\infty.
\]
Although precise value of $\C$ remains unknown, we give various theoretical
bounds for it, as well as rather precise results of computer simulation. 

While the taut string clearly depends on entire trajectory of $W$, we also 
consider an adaptive version of the problem by giving a construction
(called Markovian pursuit) of a random function $h(t)$ based only on the values 
$W(s),s\le t$, and having minimal asymptotic energy. The solution, i.e. an optimal
pursuit strategy, turns out to be related with a classical minimization problem 
for Fisher information on the bounded interval. 
\end{abstract}
\vskip 1cm

\noindent
\textbf{2010 AMS Mathematics Subject Classification:}
Primary: 60G15;  Secondary: 60G17, 60F15. 
\bigskip

\noindent
\textbf{Key words and phrases:} Gaussian processes, 
Markovian pursuit, taut string, Wiener process.  
\vfill

\newpage
\section*{Introduction}
\setcounter{equation}{0}

Given a time interval $[0,T]$ and two functional boundaries
$g_1(t)<g_2(t)$, $0\le t \le T$, the {\it taut string} is a function $h_*$
that for any (!) convex function $\varphi$ provides minimum for 
the functional
\[ F_\varphi(h):=\int_0^T \varphi(h'(t)) \, dt
\]
among all absolutely continuous functions $h$ with given starting 
and final values and satisfying
\[
   g_1(t) \le h(t)\le g_2(t),\qquad 0\le t \le T.
\]  
The list of simultaneously optimized functionals includes energy 
$\int_0^T h'(t)^2 dt$,
variation $\int_0^T |h'(t)| dt$, graph length 
$\int_0^T \sqrt{1+h'(t)^2} dt$, etc.    

The first instance of taut strings that we have found in the literature is in 
G. Dantzig's paper \cite{Dantzig}. Dantzig notes there that the problem 
under study and its solution was discussed in R. Bellman's seminar at RAND 
Corporation in 1952. The taut strings were later used in Statistics, see  
\cite{Mammen} and \cite{Davies}. 
In the book \cite[Chapter 4, Subsection 4.4]{Scherzer}, 
taut strings are considered in connection with problems in image processing. 
Quite recently, taut strings were applied to a buffer management 
problem in communication theory, see \cite{Setterqvist}.

In this article, we study the energy of the taut string going through the tube
of constant width constructed around sample path of a Wiener process $W$, 
i.e. for some $r>0$ we let $g_1(t):=W(t)-r$, $g_2(t):=W(t)+r$,
see Fig. \ref{fig:ts}.

%%%%%%%%%%%%%%%%%%%%%%%%%%%%%%%%%%%%%%%%%%%%%%%%%%%%%%%%%%%%%%%%%%%%%%
%%% Taut string picture
\begin{center}
\begin{figure} [ht]

\begin{center}
\includegraphics[height=1.8in,width= 3.5in]{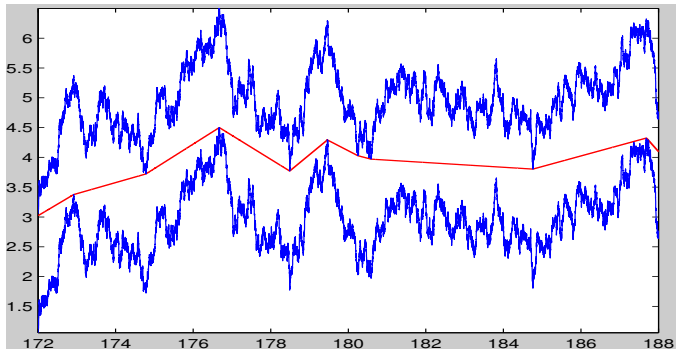}
\end{center}

\caption{A fragment of taut string accompanying Wiener process.}
\label{fig:ts}
\end{figure}
\end{center}

%%%%%%%%%%%%%%%%%%%%%%%%%%%%%%%%%%%%%%%%%%%%%%%%%%%%%%%%%%%%%%%%%%%%%

We focus attention
on the behavior in a long run: we show that when $T\to \infty$, the taut string
spends asymptotically constant amount of energy $\C^2$ per unit of time.
Precise assertions are given in Theorems \ref{t:mean} and \ref{t:alsur} below.
The constant $\C$ shows {\it how much energy an absolutely continuous 
function must spend if it is bounded to stay within a certain distance from the 
non-differentiable trajectory of $W$}. 

Although precise value of $\C$ remains unknown, we give various theoretical
bounds for it in Section \ref{s:estimates}, as well as the results 
of computer simulation in Section \ref{s:simulation}. The latter suggest 
$\C\approx 0.63$.

If we take the pursuit point of view, considering $h(\cdot)$ as a trajectory
of a particle moving with finite speed and trying to stay close to a Brownian
particle, then it is much more natural to consider constructions that define
$h(t)$ in adaptive way, i.e. on the base of the known $W(s),s\le t$. Recall
that the taut string depends on the entire trajectory  $W(s),s\le T$, hence
it does not fit the adaptive setting. In view of Markov property of $W$, the 
reasonable pursuit strategy for $h(t)$ is to move towards $W(t)$ with the speed 
depending on the distance $|h(t)-W(t)|$. In this class of algorithms we find
an optimal one in Section \ref{s:pursuit}. The corresponding function spends 
in average $\tfrac{\pi}{2}\approx 1.57$ units of energy per 
unit of time. Comparing of two constants shows that we have to pay more than
double price for not knowing the future of the trajectory of $W$.
To our great surprise, the search of optimal pursuit strategy boils down
to the well known variational problem: minimize Fisher information on the class
of distributions supported on a fixed bounded interval.
 
We conjecture that the provided algorithm is the optimal one in the entire 
class of adaptive algorithms.  
\medskip

In Section \ref{s:misc} we establish some connections with other well known
settings and problems.

First, we recall that the famous Strassen's functional law of the iterated logarithm
(FLIL) and its extensions handling convergence rates in FLIL actually deal exactly 
with the energy of taut strings.  Not surprisingly, we borrowed some techniques
for evaluation of this energy from FLIL research. Yet it should be noticed that
FLIL requires very different range of parameters $r$ and $T$ than those emerging
in our case. The FLIL tubes are much wider, hence the taut string energy is much
lower than ours.  This is why Strassen law with its super-slow loglog rates is so 
hard to reproduce in simulations, while our results handling the same type of 
quantities are easily observable in computer experiment. 

Second, we briefly look at the taut string as a minimizer of
of variation
\[
   \V(h):= \int_0^T |h'(t)| dt.
\]  
Since $|\cdot|$ is not a {\it strictly} 
convex function, the corresponding variational problem typically has 
{\it many} solutions.  In \cite{LoMil,Mil}  another minimizer of $\V(h)$ is 
described in detail, a so called {\it lazy function}. As E.~Schertzer pointed 
to us, the relations between the taut strings and lazy functions are yet to be 
clarified.

Finally, we briefly describe a discrete analogue of our problem thus giving 
flavor of eventual applications.
\medskip

As a conclusion, Section \ref{s:concl} traces some forthcoming or possible
developments of the treated subject.

\section{Notation and main results}
\label{s:nota}
\setcounter{equation}{0}

Throughout the paper, we consider uniform norms
\[
   ||h||_T:= \sup_{0\le t\le T} |h(t)|, \qquad h\in \CC[0,T],
\]
and Sobolev-type norms
\[
   |h|^2_T:= \int_0^T h'(t)^2 dt, \qquad h\in AC[0,T],
\]
where $AC[0,T]$ denotes the space of absolutely continuous functions on
$[0,T]$. It is natural to call  $|h|^2_T$ {\it energy}.

Let $W$ be a Wiener process. We are mostly interested in its approximation 
characteristics
\[
  \iw(T,r):=\inf\{|h|_T; h\in AC[0,T], ||h-W||_T\le r, h(0)=0  \}
\]
and
\[
  \iwz(T,r):=\inf\{|h|_T; h\in AC[0,T], ||h-W||_T\le r, h(0)=0, h(T)=W(T)  \}.
\]
The unique functions at which the infima are attained are called {\it taut string},
resp. {\it taut string with fixed end}.

Our main results are as follows. 

\begin{thm} \label{t:mean}
There exists a constant $\C\in (0,\infty)$ such that
if $\tfrac{r}{\sqrt{T}}\to 0$, then
\begin{eqnarray} 
   \label{toc}
   \frac{r} { T^{1/2}}\,  \iw(T,r) \tomean \C,
   \\
   \label{tocz}
   \frac{r} { T^{1/2}}\, \iwz(T,r) \tomean \C,    
\end{eqnarray}
for any $q>0$.
\end{thm}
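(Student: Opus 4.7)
The first step is a scaling reduction: with $\tilde W(s):=W(r^2 s)/r$ (another standard Wiener process) and $S:=T/r^2$, the substitution $h(t)\mapsto \tilde h(s):=h(r^2 s)/r$ is an energy-preserving bijection between the set of admissible functions on $[0,T]$ at tube width $r$ and the set of admissible functions on $[0,S]$ at tube width $1$ for $\tilde W$. Hence $\tfrac{r}{\sqrt T}\iw(T,r)=\tfrac{1}{\sqrt S}\iw(S,1)$ (and similarly for $\iwz$), and the hypothesis $r/\sqrt T\to 0$ becomes $S\to\infty$. It therefore suffices to prove $\iw(S,1)/\sqrt S\tomean\C$ and $\iwz(S,1)/\sqrt S\tomean\C$ for some $\C\in(0,\infty)$.

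For the a.s.\ limit I would apply Kingman's subadditive ergodic theorem to the fixed-endpoint quantity. For $s<t$, let $X_{s,t}:=\iwz(t-s,1)^2$ computed for the shifted trajectory $W(s+\cdot)-W(s)$. The array $(X_{s,t})$ is stationary under time shifts, the shift is ergodic on Wiener increments, and concatenating the two optimal fixed-endpoint taut strings on $[s,u]$ and $[u,t]$ (they agree at $u$, both being equal to $W(u)$) produces an admissible fixed-endpoint function on $[s,t]$, so $X_{s,t}\le X_{s,u}+X_{u,t}$. Provided $\E X_{0,1}<\infty$, Kingman's theorem yields $X_{0,n}/n\to \C^2$ a.s.\ and in $L^1$, with $\C^2:=\inf_n \E X_{0,n}/n$ constant by ergodicity. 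The lower bound $\C^2>0$ follows by Cauchy--Schwarz applied to any admissible $h$:
\[
    |h|_n^2 \;\ge\; \sum_{k=0}^{n-1}\bigl(h(k+1)-h(k)\bigr)^2 \;\ge\; \sum_{k=0}^{n-1}\bigl(|W(k+1)-W(k)|-2\bigr)_+^2,
\]
whose right-hand side divided by $n$ tends to the strictly positive constant $\E(|W(1)|-2)_+^2$ by the strong law of large numbers. The upper bound $\C^2<\infty$ follows from an explicit construction: piecewise linear interpolation of $W$ at dyadic points of mesh $2^{-N(\omega)}$, with $N(\omega)$ the smallest integer such that $\|h_N-W\|_\infty\le 1$; super-exponential decay of $\P(N(\omega)\ge N)$ (Gaussian concentration of the modulus of continuity of $W$), together with the trivial estimate $\E|h_N|_1^{2p}\lesssim 2^{pN}$, yields $\E X_{0,1}^p<\infty$ for every $p\ge 1$.

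The $L^q$ convergence for every $q>0$ then follows by combining a.s.\ convergence with uniform integrability: applying the construction independently on the unit subintervals of $[0,n]$ bounds $X_{0,n}$ by a sum of $n$ i.i.d.\ copies of a random variable with all moments finite, so $\{X_{0,n}/n\}$ is bounded in every $L^p$. The extension from integer $n$ to continuous $S$ uses the sandwich $X_{0,\lfloor S\rfloor}\le \iwz(S,1)^2\le X_{0,\lceil S\rceil}+O(1)$, and the free-endpoint statement follows from $\iw(S,1)\le\iwz(S,1)$ together with a converse bound obtained by repairing a free-endpoint optimiser on $[S-1,S]$ so as to hit $W(S)$, at additional energy cost $O(1)$, negligible after division by $\sqrt S$. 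The most delicate technical point I anticipate is the moment control of the random interpolation depth $N(\omega)$ in the upper-bound construction and its propagation to polynomial moments of the energy; by contrast, the subadditive-ergodic step and the scaling reduction are essentially routine given the self-similarity and independent-increments properties of $W$.
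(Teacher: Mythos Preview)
Your strategy is sound and reaches the result by a genuinely different route from the paper's. The paper also starts from the subadditivity $\iwz(T_1+T_2,r)^2\le\iwz(T_1,r)^2+I^0_{\widetilde W_{T_1}}(T_2,r)^2$, but then takes expectations and uses only the deterministic Fekete lemma to get $\E\iwz(T,r)^2/T\to\C^2$. The passage to $L^q$ convergence is done \emph{not} via uniform integrability but via Gaussian concentration: $w\mapsto I^0_w(T,r)$ is $1$-Lipschitz in the Cameron--Martin norm, so the isoperimetric inequality gives $\E|\iwz-m^0|^q\le\E|N|^q$ uniformly in $T,r$, whence $(r/\sqrt T)(\iwz-m^0)\to 0$ in $L^q$ directly. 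Your use of Kingman's theorem plus the i.i.d.\ domination $X_{0,n}\le\sum_{k=0}^{n-1}Y_k$ is a legitimate alternative that avoids any appeal to Gaussian isoperimetry and would extend to processes with merely ergodic stationary increments, at the price of losing the sharp uniform variance bound that the paper obtains for free.

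Two places need tightening. First, your sandwich $X_{0,\lfloor S\rfloor}\le\iwz(S,1)^2$ is false as written: $\iwz(\cdot,1)$ is \emph{not} monotone in the time variable (the endpoint moves with $W$). The correct lower bound is $\iwz(S,1)\ge\iw(S,1)\ge\iw(\lfloor S\rfloor,1)$, so you must first carry the free-endpoint convergence along integers; the upper bound for $\iwz(S,1)^2$ comes from subadditivity, $\iwz(S,1)^2\le X_{0,\lfloor S\rfloor}+I^0_{\widetilde W}(\{S\},1)^2$. Second, the repair ``on $[S-1,S]$ at cost $O(1)$'' is not quite free if you insist on staying inside the width-$1$ tube: the natural bridge may exit. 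The paper handles this by extending forward to $[0,S+1]$ and temporarily widening the tube to $1+\delta$, obtaining $\E\iw(S,1)^2\ge\E\iwz(S+1,1+\delta)^2-\E\iwz(1,\delta)^2-1$; after rescaling, letting $\delta\to 0$ recovers the sharp constant. Your version can be made to work, but you should expect to widen the tube (or shift the reference path) rather than repair in place.
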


We may complete the mean convergence with  almost sure convergence
to $\C$.

\begin{thm} \label{t:alsur}
For any fixed $r>0$, when $T\to\infty$, we have
\begin{eqnarray*}  
   \frac{r} { T^{1/2}}\,  \iw(T,r) \toalsur \C,
   \\
   \frac{r} { T^{1/2}}\, \iwz(T,r) \toalsur \C.   
\end{eqnarray*}
\end{thm}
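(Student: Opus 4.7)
The plan is to apply Kingman's subadditive ergodic theorem to $\iwz(T,r)^2$, to identify the resulting almost sure limit via the $L^q$-convergence of Theorem \ref{t:mean}, and then to transfer the result to $\iw(T,r)$ by a short endpoint-bridging construction.

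First I would establish subadditivity of the fixed-end energy. For $0\le s\le t$, let $W^{(s)}(u):=W(s+u)-W(s)$, which by the strong Markov property is a Wiener process independent of $\mathcal{F}_s$. Concatenating the fixed-end taut string on $[0,s]$ (which terminates at $W(s)$) with the shifted fixed-end taut string associated to $W^{(s)}$ on $[0,t-s]$ yields an admissible competitor for $\iwz(t,r)$. Hence the family $X_{s,t}:=I^0_{W^{(s)}}(t-s,r)^2$ satisfies $X_{0,t}\le X_{0,s}+X_{s,t}$, is stationary under time shifts, and is integrable thanks to the $L^q$-bound implicit in Theorem \ref{t:mean}. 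Kingman's subadditive ergodic theorem, together with the ergodicity of the Wiener shift, then produces a deterministic $\gamma=\gamma(r)\in[0,\infty)$ with $\iwz(N,r)^2/N\to\gamma$ almost surely along integers $N\to\infty$. Since Kingman's formula gives $\gamma=\lim_N\E X_{0,N}/N$ and Theorem \ref{t:mean} (applied for fixed $r$ as $N\to\infty$, so that $r/\sqrt{N}\to 0$) gives $\E X_{0,N}/N\to\C^2/r^2$, we identify $\gamma=\C^2/r^2$, yielding $(r/\sqrt{N})\iwz(N,r)\to\C$ a.s.

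To pass from integer to continuous $T$, I would sandwich using subadditivity: for $N=\lfloor T\rfloor$,
\[
   \iwz(T,r)^2\le\iwz(N,r)^2+X_{N,T},\qquad \iwz(N+1,r)^2\le\iwz(T,r)^2+X_{T,N+1}.
\]
The two correction terms are each dominated by $X_{N,N+1}$-type stationary variables of finite mean, so a standard Borel--Cantelli argument (using the moment bounds from Theorem \ref{t:mean}) shows that they are $o(T)$ almost surely, which establishes the claim for $\iwz$.

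Finally, to transfer to $\iw(T,r)$: the inequality $\iw(T,r)\le\iwz(T,r)$ is immediate from the definitions. For a matching lower bound modulo $o(\sqrt{T})$, given the minimizer $h^*$ of $\iw(T,r)$ (which satisfies $|h^*(T)-W(T)|\le r$), I would extend $h^*$ on $[T,T+1]$ by a taut string joining $h^*(T)$ and $W(T+1)$ inside the tube of radius $r$ around $W$; this yields an admissible competitor for $\iwz(T+1,r)$, hence $\iwz(T+1,r)^2\le\iw(T,r)^2+E_T$ for a bridging energy $E_T$. The main obstacle is precisely the construction and uniform control of this bridge, because on $[T,T+1]$ the Wiener path can oscillate by more than $r$ so that a naive linear interpolation may leave the tube. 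One argues that the bridging taut string nonetheless exists (both endpoints lie in the current cross-section of the tube, and the tube has positive width $r$), and that by Brownian stationarity $E_T$ is dominated in distribution by a fixed integrable variable, so that Borel--Cantelli yields $E_T/T\to 0$ almost surely, completing the proof.
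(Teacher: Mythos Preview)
Your Kingman approach for the integer-time limit $\iwz(N,r)^2/N\to\C^2/r^2$ is a valid and attractive alternative to the paper's method; the paper instead proves the a.s.\ result for $\iw$ first, using Gaussian concentration of $\iw(T,r)$ around its median together with Borel--Cantelli along an exponential subsequence $T_k=a^k$, and then interpolates to continuous $T$ via the monotonicity of $T\mapsto\iw(T,r)$. Only afterwards does it deduce the result for $\iwz$ from that for $\iw$. So the order is reversed compared to yours, and concentration replaces subadditive ergodicity.

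There are, however, two genuine gaps in your plan as written. First, the integer-to-continuous step for $\iwz$: your correction term $X_{N,T}=I^0_{W^{(N)}}(T-N,r)^2$ with $T-N\in[0,1)$ is \emph{not} dominated by any finite variable. By Cauchy--Schwarz $I^0_{W^{(N)}}(\tau,r)^2\ge (W^{(N)}(\tau))^2/\tau$, and by the law of the iterated logarithm $\sup_{0<\tau<1}(W^{(N)}(\tau))^2/\tau=\infty$ a.s.\ for every $N$; so the Borel--Cantelli argument fails. The paper avoids this because it never has to pass to continuous $T$ for $\iwz$ via subadditivity: it already has continuous-$T$ convergence for $\iw$ from monotonicity, and the bridge it uses always has the fixed length $1$.

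Second, the bridge from $\iw$ to $\iwz$ inside the tube of radius exactly $r$ is problematic. If $h^*(T)$ sits on the boundary $W(T)+r$, then any admissible bridge must satisfy $h(T+s)-h(T)\le \tW_T(s)$, forcing it below the running minimum of $\tW_T$; there is then no absolutely continuous function with square-integrable derivative meeting this constraint near $s=0$. Even when $|\rho_T|:=|h^*(T)-W(T)|<r$, the natural bound $E_T\le I^0_{\tW_T}(1,\,r-|\rho_T|)^2+O(1)$ is not uniformly integrable as $|\rho_T|\uparrow r$, so ``dominated by a fixed integrable variable'' is unjustified. The paper's remedy (Proposition with inequality \eqref{lower}) is to enlarge the tube to radius $r+\delta$ on the bridging interval, giving
\[
  \iwz(T+1,r+\delta)^2\ \le\ \iw(T,r)^2+\itwz(1,\delta)^2+r^2+2r\,|\tW_T(1)|,
\]
whose correction terms are stationary in $T$ with Gaussian tails; one then lets $\delta\to 0$ at the very end. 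Both gaps are repairable, but the repairs essentially push you toward the paper's order of operations: get $\iw$ for continuous $T$ first (your Kingman limit along integers plus the monotonicity of $\iw$ would suffice), then pass to $\iwz$ via the $\delta$-enlarged, fixed-length bridge.
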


\section{Basic properties of $\iw$ and $\iwz$}
\setcounter{equation}{0}
\label{s:basic}

We prepare the proofs of the main results given below in Subsections \ref{ss:asym2}
and \ref{ss:asym3}
by exploring scaling and concetration properties of the taut string's energy.

\subsection{Scaling}

Given two functions $W(t)$ and $h(t)$ on $[0,T]$, let us rescale them onto the 
time interval $[0,1]$ by letting
\[
  X(s):= \frac{W(sT)}{\sqrt{T}}, \ g(s):= \frac{h(sT)}{\sqrt{T}}, \qquad 0\le s\le 1. 
\] 
Then 
\[
   ||g-X||_1=\frac {||h-W||_T}{\sqrt{T}}
\]
and
\[
   |g|_1^2= \int_0^1 g'(s)^2 ds =  \int_0^1 \left( \frac{ h'(sT)T}{\sqrt{T}}\right)^2  ds 
   = T \int_0^1 h'(sT)^2  ds 
   =  \int_0^T h'(t)^2  dt
   =|h|^2_T.
\]
The boundary conditions are also transformed properly: namely, $h(0)=W(0)$ is equivalent to
$g(0)=X(0)$, while $h(T)=W(T)$ is equivalent to $g(1)=X(1)$. Therefore, $h$
belongs to the set $\{h: h\in AC[0,T], ||h-W||_T\le r, h(0)=0  \}$ iff 
$g$ belongs to the analogous set 
$\{g: g \in AC[0,1], ||g-X||_1\le \tfrac{r}{\sqrt{T}}, g(0)=0  \}$. 

Recall that if $W$ is a Wiener process on $[0,T]$, then $X(s):= \tfrac{W(sT)}{\sqrt{T}}$
is a Wiener process on $[0,1]$. We conclude that
\[
  \iw(T,r) \=L \iw  \left(1, \tfrac{r}{\sqrt{T}}\right). 
\] 
Similarly,
\be \label{scal0}
  \iwz(T,r) \=L  \iwz \left(1, \tfrac{r}{\sqrt{T}}\right). 
\ee 
Therefore, assertions \eqref{toc} and \eqref{tocz} may be rewritten in a one-parameter form
\begin{eqnarray} 
   \label{toc1}
   \eps \,  \iw(1,\eps) \tomean \C, \qquad \textrm{as } \eps\to 0,
   \\
   \nonumber
   \eps\, \iwz(T,\eps) \tomean \C, \qquad \textrm{as } \eps\to 0.
\end{eqnarray}

\subsection{Finite moments}

We will show now that both $\iw(T,r)$ and $\iwz(T,r)$ have finite exponential moments.
Yet in the following we only need that
\begin{eqnarray}
  D(T,R)  :=  \E \iw(T,r)^2 <\infty,
  \\
  D^0(T,R):=  \E \iwz(T,r)^2 <\infty.
\end{eqnarray}  
Let $v$ be an even integer. Then $\delta:=\tfrac 2v$ is inverse to an integer, and
we may cut the time interval $[0,1]$ into $\delta^{-1}$ intervals of length $\delta$.
Let $W_\delta$ be the linear interpolation of $W$ based on the knots $(j\delta,W(j\delta))$,
$0\le j\le \delta^{-1}$. Clearly, we have either $||W_\delta-W||_1>r$ or
$\iwz(1,r)\le |W_\delta|_1$. It follows that
\be \label{p2}
 \P\left(\iwz(1,r)^2>v\right)
 \le
  \P\left(||W_\delta-W||_1>r \right) +  \P\left(|W_\delta|_1^2>v \right). 
\ee
Notice that
\begin{eqnarray}
  \nonumber 
  ||W_\delta-W||_1 &=& \max_{0\le t\le 1} |W_\delta(t)-W(t)|
  \\ \nonumber
  &=&
  \max_{0\le j < \delta^{-1}} \ \max_{j\delta\le t\le (j+1)\delta} |W_\delta(t)-W(t)|
  \\ \label{sum1}
  &\=L& \max_{0\le j < \delta^{-1}} \sqrt{\delta} \max_{0\le t\le 1} |B_j(t)|,
\end{eqnarray}
where $(B_j)$ are independent Brownian bridges, and
\begin{eqnarray}
    \nonumber
    |W_\delta|_1^2 
    &=& \int_0^1 W_\delta'(t)^2 dt 
%%   = \sum_{0\le j< \delta^{-1}}\int_{j\delta}^{(j+1)\delta} W_\delta'(t)^2 dt
%%    \\ \nonumber
%%    &=& \sum_{0\le j < \delta^{-1}} 
%%    \frac {\left(W((j+1) \delta)-W(j \delta)\right)^2}{\delta^2} \, \delta
    \\ \label{sum2}
    &=& \delta^{-1} \sum_{0\le j < \delta^{-1}}
        \left(W((j+1) \delta)-W(j \delta)\right)^2 
    = \sum_{0\le j < \delta^{-1}} \eta_j^2, 
\end{eqnarray}
where $(\eta_j)$ are i.i.d. standard normal random variables.

Now we may evaluate the probabilities in \eqref{p2}. 
By using \eqref{sum1}, we obtain
\begin{eqnarray*}
    \P\left(||W_\delta-W||_1>r \right)
    &\le& 
    \delta^{-1} \ \P\left(||B||_1> \frac{r}{\delta} \right)
    \le 
    \delta^{-1} \ \P\left(||W||_1> \frac{r}{\delta} \right) 
    \\
    &\le& 
    \frac{v}{2}\cdot 2 \cdot \exp\left( - \frac{r^2}{2 \delta} \right)  
    =  v\ \exp (-r^2 v/4).  
\end{eqnarray*}
On the other hand, by using Cram\'er--Chernoff theorem and \eqref{sum2},
\[
  \P\left(|W_\delta|_1^2>v \right)
  = \P\left( \sum_{0\le j < v/2} \eta_j^2 >v \right)
  \le \exp\{-c_1 v\}
\]
for all $v$ and some universal constant $c_1$.
It follows that
\[
    \P\left(\iwz(1,r)^2>v\right)
    \le   v\ \exp (-r^2 v/4) + \exp\{-c_1 v\}.
\]
Hence,
\[
   \E \exp \left(c \, \iwz(1,r)^2\right) <\infty,
\]
whenever $0<c<\min\{\tfrac{r^2}{4}, c_1\}$. By scaling we also have
\[
   \E \exp \left(c\, \iwz(T,r)^2\right) <\infty,
\]
for any $r,T>0$ and sufficiently small positive $c$.
It follows from the definitions that
\be \label{comp_simple}
  \iw(T, r)\le  \iwz(T, r) \qquad \forall\, T,r>0.
\ee 
Hence, the exponential moment of $\iw(T,r)$ is finite, too.

\subsection{ Relations between $\iw$ and $\iwz$ }

We already noticed in \eqref{comp_simple} that
$\iw(T, r)\le  \iwz(T, r)$.
We will show now that a kind of converse estimate is also true.

\begin{prop} For all positive $T,r,\delta$ it is true that
\be \label{lowerE}
  \E \iw(T,r)^2 \ge  \E \iwz(T+1,r+\delta)^2 - \E \iwz(1,\delta)^2 -r^2.
\ee
\end{prop}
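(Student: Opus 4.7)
My plan is to build from an optimal $h$ for $\iw(T,r)$ a feasible competitor $\tilde h$ for $\iwz(T+1,r+\delta)$, by extending $h$ on $[T,T+1]$ in a clever way that pins $\tilde h(T+1)$ to $W(T+1)$ while only paying a controlled amount of extra energy.

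Let $h$ be the taut string attaining $\iw(T,r)$; so $h(0)=0$, $\|h-W\|_T\le r$, and $|h|_T^2=\iw(T,r)^2$. Set $\alpha:=W(T)-h(T)$, which is $\mathcal{F}_T$-measurable with $|\alpha|\le r$. Let $Y(s):=W(T+s)-W(T)$, $s\in[0,1]$; by the independence of increments, $Y$ is a standard Wiener process independent of $\mathcal{F}_T$. Let $\bar g$ be the taut string with fixed end for $Y$ at tube width $\delta$, so $\bar g(0)=0$, $\bar g(1)=Y(1)$, $\|\bar g-Y\|_1\le\delta$, and $|\bar g|_1^2=\itwz(1,\delta)^2$ (where $\widetilde W$ stands for $Y$); by scaling this has the same distribution as $\iwz(1,\delta)^2$ and is independent of $\mathcal F_T$.

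Define the extension
\[
   \tilde h(t):=h(t)\text{ for }t\in[0,T],
   \qquad
   \tilde h(T+s):=(1-s)h(T)+sW(T)+\bar g(s),\ s\in[0,1].
\]
One checks directly that $\tilde h(T+1)=W(T)+\bar g(1)=W(T+1)$ and that
\[
  \tilde h(T+s)-W(T+s)=(1-s)(h(T)-W(T))+(\bar g(s)-Y(s)),
\]
so $\|\tilde h-W\|_{T+1}\le r+\delta$. Hence $\tilde h$ is admissible for $\iwz(T+1,r+\delta)$, which yields $\iwz(T+1,r+\delta)^2\le |\tilde h|_{T+1}^2$.

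For the energy bound, compute $\tilde h'(T+s)=\alpha+\bar g'(s)$, so
\[
   \int_T^{T+1}\tilde h'(t)^2\,dt=\alpha^2+2\alpha\,\bar g(1)+|\bar g|_1^2
   =\alpha^2+2\alpha\,Y(1)+|\bar g|_1^2.
\]
Here is the key step: since $\alpha\in\mathcal{F}_T$ and $Y(1)$ is independent of $\mathcal{F}_T$ with mean zero, $\E[\alpha\,Y(1)]=0$. Taking expectations,
\[
   \E|\tilde h|_{T+1}^2=\E \iw(T,r)^2+\E\alpha^2+\E|\bar g|_1^2
   \le \E\iw(T,r)^2+r^2+\E\iwz(1,\delta)^2.
\]
Combining with $\E\iwz(T+1,r+\delta)^2\le\E|\tilde h|_{T+1}^2$ gives \eqref{lowerE}.

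The only delicate point is the choice of the interpolation $(1-s)h(T)+sW(T)+\bar g(s)$: a naive gluing (for instance using $\bar g$ alone, or a separate short taut string to close the gap) either fails to hit $W(T+1)$ or produces a cross term whose expectation is not obviously zero. The affine absorption of $\alpha$ combined with the independence of the future increment of $W$ is exactly what makes the cross term vanish in expectation and gives the additive error $r^2+\E\iwz(1,\delta)^2$.
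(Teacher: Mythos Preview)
Your proof is correct and follows essentially the same approach as the paper's. The paper introduces the auxiliary function $H(t):=\rho+h(t)-\rho t$ with $\rho=\widehat h(T)-W(T)$ and then sets $\widehat h(T+s):=W(T)+H(s)$; unwinding this (with $\rho=-\alpha$) gives exactly your affine extension $(1-s)h(T)+sW(T)+\bar g(s)$, and the energy computation and the vanishing of the cross term $\E[\alpha\,Y(1)]$ by independence are identical to the paper's argument.
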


\begin{proof} 
Let us fix for a while the time interval $[0,1]$ and let
us approximate the trajectory of Wiener process $W$ by functions
starting from some arbitrary point $\rho\in\R$. Let $\delta>0$ and 
let $h(\cdot)$ be the taut string with fixed end at which $\iwz(1,\delta)$ 
is attained.
Then we have $h(0)=0$, $h(1)=W(1)$, $||h-W||_1\le \delta$, 
 $|h|_1=\iwz(1,\delta)$. Let
 \[
   H(t):= \rho+h(t)-\rho t, \qquad 0\le t\le 1.
 \]
 Then $H(0)=\rho+h(0)=\rho$,
 $H(1)=\rho+h(1)-\rho=h(1)=W(1)$,
 \be \label{HW}
     ||H-W||_1\le ||h-W||_1 +|\rho| \max_{0\le t\le 1}|1-t| \le \delta+|\rho|, 
 \ee
 and
 \begin{eqnarray} \nonumber
  |H|_1^2&=& \int_0^1 H'(t)^2 dt = \int_0^1(h'(t)-\rho)^2 dt
  \\ \nonumber
  &=& 
  \int_0^1 h'(t)^2 dt +\rho^2 -2\rho \int_0^1 h'(t) dt
  \\ \label{H1}
  &=& 
   |h|_1^2  +\rho^2 -2\rho (h(1)-h(0))
   = \iwz(1,\delta)^2 +\rho^2 -2\rho  W(1).
 \end{eqnarray} 

Now we pass to the lower bound for $\iw(T, r)$. Let us fix
$r,\delta,T$ and produce an approximation for $W$ on
$[0,T+1]$ with the fixed end. First, let $\widehat h (t), 0\le t\le T$, 
be the taut string at which $\iw(T,r)$ is attained. The end point
is not fixed, thus $\rho:=\widehat h(T)-W(T)$ need not vanish.
Nevertheless we still have
\[
  |\rho|\le ||\widehat h-W||_T\le r.
\]
Now we approximate the auxiliary Wiener process
\[
   \tW_T(s):=W(T+s)-W(T), \qquad 0\le s\le 1,
\]
by the function $H(\cdot)$ defined above and let
\[
  \widehat h(T+s):= W(T) + H(s), \qquad 0\le s\le 1. 
\]
At the boundary point $T$ the first definition yields the value
$\widehat h(T)=W(T)+\rho$, the second definition yields
$\widehat h(T):= W(T) + H(0)$; the two values coincide by the definition of 
function $H$.

Moreover,
\[
  \widehat h(T+1)= W(T) + H(1)= W(T)+ \tW_T(1)= W(T)+W(T+1)-W(T)=W(T+1). 
\]
Therefore, the extended function $\widehat h(\cdot)$ provides an absolutely
continuous approximation with fixed end to $W$ on $[0,T+1]$.
Furthermore, by \eqref{HW} for $0\le s\le 1$ we have
\[
 |W(T+s)-\widehat h(T+s)|
 =  |\tW_T(s)+ W(T) - W(T) -H(s)| 
 = |\tW_T(s)-H(s)| \le \delta+|\rho|  \le \delta+r.   
\]
Finally, by \eqref{H1},
\[
\int_T^{T+1} \widehat h'(t)^2 dt
=
\int_0^{1} \widehat H'(s)^2 ds
= |H|_1^2
=  \itwz(1,\delta)^2 +\rho^2 -2\rho  \tW_T(1).
\]
We conclude that
\begin{eqnarray}
  \nonumber
  && \iwz(T+1,r+\delta)^2
  \le |\widehat h|_{T+1}^2
  \\ \label{lower}
  &=& |\widehat h|_T^2+  \int_T^{T+1} \widehat h'(t)^2 dt
  \le \iw(T,r)^2 + \itwz(1,\delta)^2 +r^2 -2\rho  \tW_T(1)
\end{eqnarray}
and turn this relation into the desired bound
\[
     \iw(T,r)^2 
     \ge  \iwz(T+1,r+\delta)^2 - \itwz(1,\delta)^2 - r^2 +2\rho  \tW_T(1).
\]
Notice that $\rho$ and $\tW_T(1)$ are independent and $\E \tW_T(1)=0$. 
By taking expectations we get the desired relation \eqref{lowerE}.
\end{proof}

\subsection{Concentration}

We first notice an almost obvious Lipschitz property of the functionals under consideration.

\begin{prop} 
For any $T,r>0$, any $w\in \CC[0,T], g\in AC[0,T]$ we have
\be \label{lip0}
  \left| I_{w+g}^0(T,r)- I_w^0(T,r) \right|\le |g|_T
\ee
and
\be \label{lip}
  \left| I_{w+g}(T,r)- I_w(T,r) \right|\le |g|_T
\ee 
\end{prop}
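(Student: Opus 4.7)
The plan is a straightforward translation argument, leveraging that $|\cdot|_T$ is the $L^2$-seminorm of the derivative and therefore obeys a triangle inequality. I would prove \eqref{lip0} first; the argument for \eqref{lip} is identical, simply dropping the right-endpoint verification.

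Fix $\eta>0$ and pick a near-minimizer $h\in AC[0,T]$ for $I_w^0(T,r)$: it satisfies the prescribed starting and terminal values, $\|h-w\|_T\le r$, and $|h|_T \le I_w^0(T,r)+\eta$. The natural candidate to exhibit for the problem with target $w+g$ is $\tilde h := h + g$. Its boundary values shift by $g(0)$ and $g(T)$, which is exactly what the new starting and terminal constraints imposed by $w+g$ demand; the tube constraint is invariant under translation,
\[
\|\tilde h - (w+g)\|_T = \|h-w\|_T \le r,
\]
so $\tilde h$ is admissible in the definition of $I_{w+g}^0(T,r)$. The triangle inequality $|\tilde h|_T \le |h|_T + |g|_T$ then gives $I_{w+g}^0(T,r) \le I_w^0(T,r) + \eta + |g|_T$; letting $\eta\to 0$ yields $I_{w+g}^0(T,r) - I_w^0(T,r)\le |g|_T$.

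For the reverse estimate, I would swap the two target functions, applying the same argument to target $w+g$ with perturbation $-g$ (and using $|-g|_T = |g|_T$). Combining the two directions gives \eqref{lip0}. Inequality \eqref{lip} is established word for word, simply omitting the verification of the right endpoint.

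There is no real obstacle here: the only point to check is that the translate $\tilde h = h+g$ inherits admissibility, and this reduces to observing that boundary values shift with the target while the tube constraint is translation-invariant. The remainder is immediate from the seminorm triangle inequality.
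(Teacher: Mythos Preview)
Your proof is correct and follows essentially the same translation argument as the paper's own proof. The only cosmetic difference is that you work with near-minimizers and an $\eta$-slack rather than invoking the attained taut string, but the mechanism---translate by $g$, check admissibility, apply the triangle inequality for $|\cdot|_T$, then swap roles---is identical.
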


It is remarkable that the Lipschitz constant in the right hand side does not depend
on $r$ and $T$.

\begin{proof} Let $h$ be the taut string at which $I_w^0(T,r)$ is attained. Then the function
$\widehat h:= h+g$ satisfies the boundary conditions $\widehat h(0)=(w+g)(0)$,
$\widehat h(T)=(w+g)(T)$ as well as
\[
   ||\widehat h-(w+g)||_T = ||(h+g)-(w+g)||_T = ||h-w||_T \le r.
\]  
Therefore, 
\[
    I_{w+g}^0(T,r) \le |\widehat h|_T \le |h|_T + |g|_T =  I_{w}^0(T,r) + |g|_T. 
\]
By applying the latter inequality to $\tilde w:=w+g$ and $\tilde g:=-g$ in place of
$w$ and $g$ we obtain
\[
    I_{w}^0(T,r) \le  I_{w+g}^0(T,r) + |g|_T, 
\]
and \eqref{lip0} follows. The proof of \eqref{lip} is exactly the same. 
\end{proof}

In the rest of the subsection parameters $T$ and $r$ are fixed, and we drop them from our notation,
thus writing $\iwz$ instead of $\iwz(T,r)$, etc. Let $m^0$ be a median for the random 
variable $\iwz$. The famous concentration inequality for Lipschitz functionals of Gaussian random 
vectors (see \cite[Section 12]{Lif}) asserts that for any $\rho>0$
\begin{eqnarray*}
  \P\left( \iwz\ge m^0+ \rho \right) &\le& \P\left( N\ge \rho \right),
  \\
  \P\left( \iwz\le m^0- \rho \right) &\le& \P\left( N\ge \rho \right),
\end{eqnarray*}
where $N$ is a standard normal random variable.

It follows that
\begin{eqnarray*}
  Var \iwz &=&\inf_{y} \E(\iwz-y)^2 \le \E(\iwz-m^0)^2
  =2\int_0^\infty \rho \, \P(|\iwz-m^0|\ge \rho) \, d\rho
  \\
  &\le&   2\int_0^\infty \rho \, \P(|N|\ge \rho) \, d\rho
  = \E |N|^2 =1.
\end{eqnarray*}
Moreover,
\[
  |\E \iwz-m^0| \le \E |\iwz-m^0| \le \sqrt{\E(\iwz-m^0)^2}
  \le 1 
\]
and 
\[
  \E \iwz \le \sqrt{\E [(\iwz)^2]} = \sqrt{[\E \iwz]^2+ Var \iwz}
  \le \sqrt{[\E \iwz]^2+ 1} \le \E \iwz +1. 
\]
Finally, we infer
\be \label{m0D0}
    m^0-1 \le \sqrt{\E [(\iwz)^2]} \le m^0 +2. 
\ee

We will also need that for any $q>0$
\begin{eqnarray} \nonumber
  \E |\iwz-m^0|^q
  &=&  q \int_0^\infty \rho^{q-1} \, \P(|\iwz-m^0|\ge \rho) \, d\rho
  \\ \label{Imzmean}
  &\le&   q \int_0^\infty \rho^{q-1} \, \P(|N|\ge \rho) \, d\rho
  = \E |N|^q.
\end{eqnarray}
Similarly, for the median $m$ of $\iw$ we obtain
\[
    m-1 \le \sqrt{\E [(\iw)^2]} \le m +2. 
\]
and
\be  \label{Immean} 
  \E |\iw-m|^q  \le \E |N|^q.
\ee

\section{Asymptotics}
\label{s:asymp}
\setcounter{equation}{0}

\subsection{Asymptotics of the second moments and medians}
Recall that
 $D(T,R):=\E \iw(T,r)^2$ and $D^0(T,R):= \E \iwz(T,r)^2$ are the second 
 moments. We prove the following.

\begin{prop} 
There exists a constant $\C\in [0,\infty)$ such that
if $\tfrac{r}{\sqrt{T}}\to 0$, then
\begin{eqnarray} 
   \label{Dtoc2}
   \frac{r^2} {T}\,  D(T,r) \to \C^2,
   \\
   \label{Dtocz2}
   \frac{r^2}{T}    \, D^0(T,r) \to \C^2. 
\end{eqnarray}
\end{prop}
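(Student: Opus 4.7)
The plan is to reduce the two-parameter limit to a one-parameter limit as $T\to\infty$ by scaling, to establish convergence first for $D^0$ via a subadditivity argument, and then to transfer the result to $D$ using the earlier estimate \eqref{lowerE}. From the scaling relation \eqref{scal0} the distribution of $\iwz(T,r)$ depends on $(T,r)$ only through the ratio $r/\sqrt T$; applying it twice yields $D^0(T,r)=D^0(T/r^2,1)$ and, analogously, $D(T,r)=D(T/r^2,1)$. Under the hypothesis $r/\sqrt T\to 0$ the variable $S:=T/r^2$ tends to $\infty$, so \eqref{Dtocz2} and \eqref{Dtoc2} reduce to showing
\[
  \frac{D^0(S,1)}{S}\longrightarrow \C^2 \quad\text{and}\quad \frac{D(S,1)}{S}\longrightarrow \C^2 \quad\text{as } S\to\infty.
\]

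For the first limit I would establish subadditivity $D^0(S_1+S_2,1)\le D^0(S_1,1)+D^0(S_2,1)$ by the natural concatenation argument: given $S_1,S_2>0$, let $h_1$ be the taut string with fixed end for $W$ on $[0,S_1]$, and let $h_2$ be the taut string with fixed end on $[0,S_2]$ for the independent increment process $\tW_{S_1}(u):=W(S_1+u)-W(S_1)$. The concatenation
\[
  \hat h(t)=\begin{cases}h_1(t),& 0\le t\le S_1,\\ W(S_1)+h_2(t-S_1),& S_1\le t\le S_1+S_2,\end{cases}
\]
is absolutely continuous with $\hat h(0)=0$, $\hat h(S_1+S_2)=W(S_1+S_2)$, satisfies $||\hat h-W||_{S_1+S_2}\le 1$, and has energy $|h_1|_{S_1}^2+|h_2|_{S_2}^2$; since it is an admissible competitor for $\iwz(S_1+S_2,1)$, taking expectations gives the desired subadditivity. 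Fekete's lemma along the integers, together with the monotonicity of $S\mapsto D^0(S,1)=D^0(1,1/\sqrt S)$ (decreasing in the tube radius, hence increasing in $S$), then yields $D^0(S,1)/S\to \C^2:=\inf_{S>0}D^0(S,1)/S\in[0,\infty)$, with the finiteness $\C^2\le D^0(1,1)<\infty$ coming from the second-moment bounds proved earlier.

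For the second limit the upper bound $\limsup_{S\to\infty} D(S,1)/S\le \C^2$ is immediate from \eqref{comp_simple}. For the matching lower bound I would specialize \eqref{lowerE} to $r=1$ and an arbitrary $\delta>0$ to obtain $D(S,1)\ge D^0(S+1,1+\delta)-D^0(1,\delta)-1$, divide by $S$, and use the scaling identity $D^0(S+1,1+\delta)=D^0((S+1)/(1+\delta)^2,1)$ together with the limit just established for $D^0$ to conclude $\liminf_{S\to\infty}D(S,1)/S\ge \C^2/(1+\delta)^2$; sending $\delta\to 0$ then closes the argument.

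The main obstacle lies in showing that $D$ and $D^0$ yield the same limiting constant. Subadditivity is available for $D^0$ precisely because the fixed endpoints permit two taut strings to be glued without losing admissibility, but no analogous structure is available for $\iw$. The asymmetric inequality \eqref{lowerE}---which compensates for the unconstrained right endpoint by inflating the tube from $r$ to $r+\delta$ over a single additional unit of time---is the key tool, and the robustness of the limit under such a perturbation (the factor $(1+\delta)^{-2}$ vanishing as $\delta\to 0$) is what allows the two constants to be identified.
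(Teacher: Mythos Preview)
Your proposal is correct and follows essentially the same route as the paper: subadditivity of $D^0$ via concatenation of fixed-end taut strings, a Fekete-type argument with monotonicity to get the limit for $D^0$, and then \eqref{comp_simple} together with \eqref{lowerE} (letting $\delta\to 0$) to identify the limit for $D$. The only cosmetic difference is that you normalize to $r=1$ at the outset via the scaling $D^0(T,r)=D^0(T/r^2,1)$, whereas the paper keeps $r$ general and applies scaling at the end.
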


\begin{proof} 
In proving \eqref{Dtoc2}, the following sub-additivity property plays the key role.
For any $r,T_1,T_2>0$ we have
\be \label{add1}
     \iwz(T_1+T_2,r)^2 \le  \iwz(T_1,r)^2 +  I_{{\tW_{T_1}}}^0 (T_2,r)^2, 
\ee
where $\tW_{T_1}(s):=W(T_1+s)-W(T_1)$ is a Wiener process. 
This means that we may approximate $W$ by taut strings with fixed 
ends separately on the intervals $[0,T_1]$ and $[T_1, T_1+T_2]$ 
by gluing them at $T_1$ due to the fixed end condition 
imposed on the first string.

Notice that $\iw(\cdot,r)$ does not possess such a nice subadditivity property.

By taking expectations in \eqref{add1}, we obtain
\be \label{add2}
   D^0(T_1+T_2,r)\le  D^0(T_1,r) + D^0(T_2,r). 
\ee
Since $\iwz(1,\eps)$ is a decreasing random function w.r.t. argument 
$\eps$, the function $D^0(1,\eps)=\E\iwz(1,\eps)^2$ is also decreasing 
in $\eps$. By the scaling argument \eqref{scal0} we observe that for 
any fixed $r>0$
\be \label{scal2}
   D^0(T,r)=D^0\left(1,\tfrac{r}{\sqrt{T}}\right)
\ee    
is an increasing function w.r.t. the argument $T$.

Fix any $T_0>0$. By using monotonicity of $D^0(T,r)$ in $T$ and iterating 
subadditivity \eqref{add2} we obtain
\begin{eqnarray*}
  \limsup_{T\to \infty} \frac{D^0(T,r)}{T} 
  &=&    \limsup_{k\to \infty} \max_{0\le \tau\le T_0} 
     \frac{D^0(kT_0+\tau,r)}{kT_0+\tau}
  \\
  &\le&    \limsup_{k\to \infty}  \frac{D^0((k+1)T_0,r)}{kT_0} 
  \\
  &\le&    \lim_{k\to \infty}  \frac{(k+1) D^0(T_0,r)}{kT_0}
  =  \frac{D^0(T_0,r)}{T_0}.
\end{eqnarray*} 
By optimizing over $T_0$ we find
\[
    \limsup_{T\to \infty} \frac{D^0(T,r)}{T} 
    \le 
    \inf_{T>0} \frac{D^0(T,r)}{T} 
    \le  \liminf_{T\to \infty} \frac{D^0(T,r)}{T}\ . 
\]
It follows that there exists a finite limit
\[
   \lim_{T\to \infty} \frac{D^0(T,r)}{T} =  \inf_{T>0} \frac{D^0(T,r)}{T} := C_r.
\]
By using the scaling \eqref{scal2} we find the limit
\begin{eqnarray*}
\C^2  &:=&   \lim_{\eps\to 0} \eps^2 D^0(1,\eps)
\\
&=&
   r^2 \lim_{T\to \infty} \frac{D^0(T,r)}{T} = C_r \, r^2.
\end{eqnarray*}
Now the relation \eqref{Dtocz2} with varying $r$ follows by 
another application of the scaling argument \eqref{scal2}. 

Now we pass to \eqref{Dtoc2}. For fixed $r>0$ it follows from 
\eqref{comp_simple} and \eqref{Dtocz2} that
\[
    \limsup_{T\to \infty} \frac{D(T,r)}{T} 
    \le
    \lim_{T\to \infty} \frac{D^0(T,r)}{T} = C_r = \C^2 \, r^{-2}.
\]
Conversely, for any fixed $\delta>0$ it follows from \eqref{lowerE} that
\[
    \liminf_{T\to \infty} \frac{D(T,r)}{T} 
    \ge
    \lim_{T\to \infty} \frac{D^0(T,r+\delta)}{T} = \C^2 \, (r+\delta)^{-2}.
\]
By letting $\delta\to 0$ we infer
\[
    \lim_{T\to \infty} \frac{D(T,r)}{T} 
    = \C^2 \, r^{-2}.
\]
which is \eqref{Dtoc2} for fixed $r$. 
The case of varying $r$ in \eqref{Dtoc2} follows by the same scaling arguments 
as above. 
\end{proof}

\begin{remark}
   We will show later in Subsection \ref{ss:olb} that $\C>0$.
\end{remark}

We may complete the convergence of second moments with convergence of medians.

\begin{cor}
Let $ m^0(T,r)$, resp. $m(T,r)$, be a median of $\iwz(T,r)$,
resp. $\iw(T,r)$. If $\tfrac{r}{\sqrt{T}}\to 0$, then
\begin{eqnarray} 
   \label{mtoc}
  \frac{r}{\sqrt{T}} \  m(T,r) \to \C,
   \\
   \label{mtocz}
   \frac{r}{\sqrt{T}} \ m^0(T,r) \to \C. 
\end{eqnarray}
\end{cor}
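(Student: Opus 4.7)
The plan is to deduce the median asymptotics from the second-moment asymptotics just established, using the concentration bounds already derived. The point is that the concentration inequality from Section 2.4 gives a bound on the discrepancy between the square root of the second moment and the median that is absolute (namely $\le 2$), independent of $T$ and $r$, so after multiplying by the prefactor $r/\sqrt{T}\to 0$ this discrepancy disappears.

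More precisely, I would argue as follows. By \eqref{m0D0},
\[
   m^0(T,r) - 1 \le \sqrt{D^0(T,r)} \le m^0(T,r) + 2,
\]
and therefore $|m^0(T,r) - \sqrt{D^0(T,r)}|\le 2$. Multiplying through by $r/\sqrt{T}$ gives
\[
   \left|\frac{r}{\sqrt{T}}\,m^0(T,r) - \frac{r}{\sqrt{T}}\sqrt{D^0(T,r)}\right| \le \frac{2r}{\sqrt{T}} \longrightarrow 0.
\]
On the other hand, by \eqref{Dtocz2},
\[
   \frac{r}{\sqrt{T}}\sqrt{D^0(T,r)} = \sqrt{\frac{r^2}{T}\,D^0(T,r)} \longrightarrow \sqrt{\C^2} = \C,
\]
which yields \eqref{mtocz}. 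The proof of \eqref{mtoc} is identical, using the analogue of \eqref{m0D0} for $\iw$ (stated just before \eqref{Immean}) together with \eqref{Dtoc2}.

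There is no real obstacle here: the only substantive inputs are the second-moment convergence proved in the preceding proposition and the Gaussian concentration estimates obtained in Subsection 2.4. The fact that the additive constants in \eqref{m0D0} do not depend on $r$ and $T$ is crucial, since after rescaling by $r/\sqrt{T}\to 0$ any such fixed error becomes negligible; this is essentially the only place where the scaling assumption $r/\sqrt{T}\to 0$ is used in this step (it is of course also used, via the proposition, to ensure the limit $\C^2$ exists).
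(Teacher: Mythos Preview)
Your proof is correct and follows essentially the same approach as the paper: invoke \eqref{m0D0} (and its analogue for $\iw$) to bound the difference between the median and $\sqrt{D^0(T,r)}$ by an absolute constant, then conclude from \eqref{Dtocz2} (resp.\ \eqref{Dtoc2}). The paper's proof is terser but the argument is identical.
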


\begin{proof} Indeed \eqref{m0D0} writes as
\[
   m^0(T,r)-1 \le \sqrt{D^0(T,r)} \le m^0(T,r) +2. 
\]
Therefore, \eqref{mtocz} follows immediately from \eqref{Dtocz2}.
Relation \eqref{mtoc} follows from \eqref{Dtoc2} in the same way.
\end{proof}

\subsection{ $L_q$-convergence}
\label{ss:asym2}

{\bf Proof of Theorem \ref{t:mean}. }
 Let $q>0$. We have to prove that
if $\tfrac{r}{\sqrt{T}}\to 0$, then
\begin{eqnarray} 
   \label{Itocmean}
   \frac{r}{\sqrt{T}} \ \iw(T,r) \tomean \C,
   \\
   \label{Itoczmean}
   \frac{r}{\sqrt{T}}  \ \iwz(T,r) \tomean \C. 
\end{eqnarray}

 In view of \eqref{mtocz} the proof of \eqref{Itoczmean} reduces to
\[
   \frac{r}{\sqrt{T}}   \left( \iwz(T,r) - m^0(T,r) \right) \tomean 0. 
\] 
Indeed by \eqref{Imzmean} we have
\[
   \left( \frac{r}{\sqrt{T}}  \right)^q   \E |\iwz(T,r)-m^0(T,r)|^q  
   \le \left(\frac{r^2}{T} \right)^{q/2}  \E |N|^q \to 0
\]
and \eqref{Itoczmean} follows.

Relation \eqref{Itocmean} follows from \eqref{mtoc} and \eqref{Immean} 
in the same way.
$\Box$

\subsection{ Almost sure convergence}
\label{ss:asym3}

{\bf Proof of Theorem \ref{t:alsur}. }
For any fixed $r>0$, when $T\to\infty$, we must prove
\begin{eqnarray}  
   \label{tocas2}
   \frac{r} { T^{1/2}}\,  \iw(T,r) \toalsur \C,
   \\
   \label{toczas2}
   \frac{r} { T^{1/2}}\, \iwz(T,r) \toalsur \C.   
\end{eqnarray}

Consider first an exponential subsequence $T_k:=a^k$ with arbitrary fixed
$a>1$. By moment estimate \eqref{Immean} and Chebyshev inequality, for any $\eps>0$
we have 
\begin{eqnarray*} 
  \sum_{k=1}^\infty \P\left( T_k^{-1/2} \, |\iw(T_k,r)- m(T_k,r)| >\eps\right)
  &\le& 
  \eps^{-q}  \, \sum_{k=1}^\infty T_k^{-q/2}   \E |\iw(T_k,r)- m(T_k,r)|^q
  \\
   &\le& 
  \eps^{-q}   \E |N|^q  \sum_{k=1}^\infty T_k^{-q/2} <\infty.  
\end{eqnarray*} 
Borel--Cantelli lemma yields 
\[
  \lim_{k\to \infty} T_k^{-1/2}  \left(\iw(T_k,r)- m(T_k,r)\right) =0  \qquad \textrm{a.s.}
\]
Taking the convergence of medians \eqref{mtoc} into account, we obtain
\[
  \lim_{k\to \infty}  \frac{r} {T_k^{1/2}}  \ \iw(T_k,r)  =\C \qquad \textrm{a.s.}
\]
Since the function $\iw(\cdot,r)$ is non-decreasing, for any $T\in [T_k,T_{k+1}]$
we have the chain
\[
  \frac{r \iw(T_k,r)}{(aT_k)^{1/2}}
  = \frac{r \iw(T_k,r)}{T_{k+1}^{1/2}}
  \le \frac{r \iw(T,r)}{T^{1/2}}
  \le \frac{r \iw(T_{k+1},r)}{T_{k}^{1/2}}
  = \frac{r \iw(T_{k+1},r)}{(T_{k+1}/a)^{1/2}}.
\]
It follows that
\[
  a^{-1/2} \C \le \liminf_{T\to \infty}  \frac{r} {T^{1/2}}  \ \iw(T,r)  
  \le \limsup_{T\to \infty}  \frac{r} {T^{1/2}}  \ \iw(T,r) 
  \le   a^{1/2} \C \qquad \textrm{a.s.}
\]
By letting $a\searrow 1$ we obtain \eqref{tocas2}.

The inequality \eqref{comp_simple} yields now the lower bound in \eqref{toczas2},
namely,
\[
  \liminf_{T\to \infty}  \frac{r} {T^{1/2}}  \ \iwz(T,r)  
  \ge
  \lim_{T\to \infty}  \frac{r} {T^{1/2}}  \ \iw(T,r)
  = \C  \qquad \textrm{a.s.}
\]

The proof  of the upper bound in \eqref{toczas2} requires more efforts 
because the monotonicity of $\iwz(\cdot,r)$ is missing. By using \eqref{lower}, 
for any $r>0,\delta>0$ we have
\begin{eqnarray*}
 \limsup_{T\to \infty}  \frac{r^2} {T}\ \iwz(T+1,r+\delta)^2
 &\le&    \limsup_{T\to \infty}  \frac{r^2} {T}  \left( 
 \iw(T,r)^2 + \itwz(1,\delta)^2 +r^2 +2r  |\tW_T(1)|\right)
 \\
 &\le& 
  \C^2 + \limsup_{T\to \infty}  \frac{r^2} {T} \,\itwz(1,\delta)^2  
  +2r   \limsup_{T\to \infty}  \frac{r^2} {T} \, |\tW_T(1)|. 
 \end{eqnarray*}
Now we show that both remaining limits vanish. Indeed, It is well known 
that
\[
    \limsup_{T\to \infty}  \frac{|\tW_T(1)|} {\sqrt{2\ln T}} 
    =  \limsup_{T\to \infty}  \frac{|W(T+1)-W(T)|} {\sqrt{2\ln T}} = 1, 
\]
hence,
\[
    \limsup_{T\to \infty}  \frac{|\tW_T(1)|}{T}  = 0. 
\]
Now write
\[ 
 \limsup_{T\to \infty}  \frac{\itwz(1,\delta)^2} {T} =
  \limsup_{k\to \infty} \sup_{k\le T\le k+1} \frac{\itwz(1,\delta)^2} {k}
  :=  \limsup_{k\to \infty} \frac{V_k(W)^2} {k}\ ,  
\]
where $V_k(W)$ are identically distributed random variables satisfying Lipschitz
condition due to \eqref{lip0}. Let $m$ be the common median of $V_k$. 
By concentration inequality it follows that for any $x>0$
\[
  \P\{ V_k(W)>m+x\} \le \P(N>x) \le \exp\{-x^2/2\}.
\]
Borel--Cantelli lemma yields now that 
\[
   \limsup_{k\to \infty} \frac{V_k(W)} {\sqrt{2\ln k}} \le 1,  
\]
Hence,
\[
   \limsup_{k\to \infty} \frac{V_k(W)^2} {k} =0.
\]
We conclude that
\[
 \limsup_{T\to \infty}  \frac{r^2} {T}\ \iwz(T+1,r+\delta)^2
 \le \C^2
\]
and by letting $\delta\to 0$ we are done with proving upper 
bound in \eqref{toczas2}.
$\Box$

\section{Quantitative estimates and algorithms}
\label{s:estimates}
\setcounter{equation}{0}

In this section we provide several theoretical lower and upper bounds
for $\C$. 

\subsection{Isoperimetric and small deviation bounds}
\label{ss:gkbound}

This subsection closely follows the ideas of Griffin and Kuelbs \cite{GK}.
Let $c>0$. Then for any $\eps>0$ we have
\[
  \P\left(\eps \, \iw(1,\eps)\ge c \right) 
  = \P\left( \iw(1,\eps)\ge c\, \eps^{-1} \right)
  = \P\left( W \not\in \eps U +  c\, \eps^{-1} K  \right) 
\]
where $U:=\{x: ||x||_1\le 1\}$ and $K:=\{h: |h|_1\le 1\}$. According to the 
Gaussian isoperimetric inequality (cf. \cite{Bor, STs}, or e.g. \cite[Section 11]{Lif}),
\[
  \P\left( W\not \in \eps U +  c\, \eps^{-1} K  \right) \le
  1- \Phi\left(  c\, \eps^{-1} + \Phi^{-1}\left(\P(W\in\eps U\right) \right),
\]
where $\Phi(\cdot)$ is the distribution function of the standard normal law.
It is well known that $\Phi^{-1}(p)\sim -\sqrt{2|\ln p|}$, as $p\to 0$.
On the other hand, by the classical small deviation estimate,
following from the Petrovskii formula of the  distribution of $||W||_1$ (cf.
\cite{Ptr} or e.g. \cite[Section 18]{Lif}) 
\[
  \ln \P(W\in\eps U ) =  \ln \P(||W||_1\le\eps) 
  \sim -\frac{\pi^2}{8} \ \eps^{-2} , \qquad \textrm{as } \eps\to 0.
\]
Hence,
\[
  \Phi^{-1}\left(\P(W\in\eps U)\right) 
  \sim  -\frac{\pi}{2} \ \eps^{-1} , \qquad \textrm{as } \eps\to 0.
\]
It follows that 
\[
   \P\left( W\not \in \eps U +  c \, \eps^{-1} K  \right) 
   \le  1- \Phi\left(  c \, \eps^{-1} -\frac{\pi}{2} \, \eps^{-1} (1+o(1)) \right)
   \to 0, \qquad \textrm{as } \eps\to 0,
\]
whenever $c> \frac{\pi}{2}$. Since $\eps\, \iw(1,\eps)\toprob \C$ by \eqref{toc1}, 
we end up with the bound
\[
    \C \le \frac{\pi}{2}\ .
\] 

\subsection{Free knot approximation: constructive approach}
Here we provide a more constructive approach to building strings having 
the right order of energy and properly approximating Wiener process. 
Let $\eps>0$ and let $W(s)$, $0\le s\le 1$, be a Wiener process. Consider 
a sequence of stopping times $\tau_j$ defined by $\tau_0:=0$ and 
\[
   \tau_j:=\inf\{t\ge \tau_{j-1}:\, |W(t)-W(\tau_{j-1})|\ge \eps/2\},\qquad j\ge 1.
\] 
By continuity of $W$ we clearly have
\[ 
    |W(\tau_j)-W(\tau_{j-1})|   = \frac{\eps}{2} \ .
\]
Let $g(\cdot)$ be the linear interpolation of $W(\cdot)$ built upon the knots
$(\tau_j,W(\tau_j))$. We stress that the knots are random, since they depend on 
the process trajectory $W(\cdot)$. This randomness is typical for {\it free 
knot approximation}, cf. \cite{Cr1,Cr2}. 

We have a good approximation of $W$ by $g$ in the uniform norm, since for any
$t\in [\tau_{j-1}, \tau_j]$ it is true that
\begin{eqnarray*}
 |W(t)-W(\tau_{j-1})| &\le& \frac{\eps}{2}\ ,
 \\
 |g(t)- W(\tau_{j-1})| &\le&  |W(\tau_j)-W(\tau_{j-1})| = \frac{\eps}{2}\ , 
\end{eqnarray*}
hence
\be \label{gW}  
   ||g-W||_1\le \eps.
\ee
Let us now evaluate Sobolev norm $|g|_1$. First, we determine the required 
number of knots $N_\eps$ from the condition
\[
   \tau_{N_\eps-1} <1 \le  \tau_{N_\eps}. 
\]
Then
\begin{eqnarray*}
  |g|_1^2 &=& \int_0^1 g'(s)^2 ds 
%%  \le \sum_{j=1}^{N_\eps} \int_{\tau_{j-1}}^{\tau_{j}} g'(s)^2 ds 
%%\\
%% &=& \sum_{j=1}^{N_\eps} \left( \tau_{j} -\tau_{j-1}\right)  
%%     \left( \frac{  g(\tau_j)-g(\tau_{j-1})}
%%  { \tau_{j} -\tau_{j-1}} \right)^2
 \\
 &\le& \sum_{j=1}^{N_\eps}  
     \frac{ \left( W(\tau_j)-W(\tau_{j-1})\right)^2}  { \tau_{j} -\tau_{j-1}} 
  =  \sum_{j=1}^{N_\eps}  \frac{(\eps/2)^2}{\Delta_j} \, 
\end{eqnarray*}
where $\Delta_j:= \tau_{j} -\tau_{j-1}$ are independent random variables identically
distributed with  $(\eps/2)^2\theta$ and 
\[
  \theta:=\inf\{t>0:\ |W(t)|=1\}.
\] 
Therefore, 
\be \label{sumtheta}
     |g|_1^2\le \sum_{j=1}^{N_\eps}  \theta_j^{-1}
\ee
where $\theta_j$ are independent copies of $\theta$. Recall that
$E_1:=\E\theta<\infty$ and $E_2:=\E(\theta^{-1})<\infty$.
By applying the law of large numbers we show that $N_\eps$ has order of
growth $\eps^{-2}=n$, and that, by the same argument, the sum in the right hand side 
of \eqref{sumtheta} also has the same order. Indeed, let $c>0$. Then
\begin{eqnarray*}
    \P\left(N_\eps>c\, \eps^{-2}\right) 
    &=& \P\left(  \sum_{j=1}^{c\,\eps^{-2}} \Delta_j<1\right) 
     = \P\left( (\eps^2/4) \sum_{j=1}^{c\, \eps^{-2}} \theta_j <1\right)
     \\
     &=& \P\left(  \frac{1}{c\,\eps^{-2}} \sum_{j=1}^{c\,\eps^{-2}} \theta_j < \frac{4}{c}\right)
     \to 0,
\end{eqnarray*}
whenever $\tfrac{4}{c}<E_1$. 
Furthermore, for any $v>0$
\[
    \P\left( \sum_{j=1}^{c\,\eps^{-2}} \theta_j^{-1} 
    \ge  \frac{v^2}{\eps^2}\right)
    =  \P\left( \frac{1}{c\,\eps^{-2}}  
    \sum_{j=1}^{c\,\eps^{-2}} \theta_j^{-1} \ge  \frac{v^2}{c}\right)
    \to 0,
\]
whenever $v^2>c\, E_2$.

By \eqref{gW}, we have $\iw(1,\eps)\le |g|_1$. Therefore, by using  
\eqref{sumtheta} and subsequent estimates, we have
\begin{eqnarray*}
   \P\left(\eps \iw(1,\eps) \ge v \right) &\le& \P\left(\eps |g|_1 \ge v \right)
   = \P\left( |g|_1^2 \ge v^2\eps^{-2} \right)
   \\
   &\le& \P\left( \sum_{j=1}^{N_\eps}  \theta_j^{-1} \ge v^2\eps^{-2}   \right)
   \\
   &\le& \P\left(N_\eps>c\, \eps^{-2}\right) 
     + \P\left( \sum_{j=1}^{c\,\eps^{-2}} \theta_j^{-1} \ge  \frac{v^2}{\eps^2}\right).  
\end{eqnarray*}
It follows that
\[
   \P\left(\eps \iw(1,\eps) \ge v \right) \to 0, \qquad \textrm{as } \eps\to 0,
\]
whenever $\tfrac{4}{c}<E_1$ and $v^2>c\, E_2$. By letting $ v^2 \searrow c\,E_2$, 
$c\searrow \tfrac{4}{E_1}$, we obtain
\[
   \P\left(\eps \iw(1,\eps) \ge x \right) \to 0, \qquad \textrm{as } \eps\to 0,
\]
whenever $x>2\sqrt{E_2/E_1}$. Since $\eps\, \iw(1,\eps)\toprob \C$ by \eqref{toc1}, 
it follows that
\be \label{ce1e2}
    \C \le  2\sqrt{E_2/E_1} .
\ee 
It is of interest to calculate the constants $E_1$ and $E_2$ in this bound. By Wald identity,
\[
  1=\E W(\theta)^2 = \E \theta= E_1.
\]
Next, let $M_t:=\sup_{0\le s\le t} |W(s)|$. Then for any $r>0$
\[
  \P(\theta^{-1}\ge r)= \P(\theta\le r^{-1}) = \P( M_{r^{-1}}\ge 1)
  =  \P( r^{-1/2}M_{1}\ge 1) = \P(M_{1}^2 \ge r).
\] 
Therefore, $\theta^{-1}$ and $M_1^2$ are equidistributed.
The distribution of $M_1$ is still inconvenient for calculations. However, it
is convenient to work with  $M_\tau$, where $\tau$ is a standard exponential random variable
independent of $W$. Indeed, by \cite[Formula 1.15.2]{BS} we have for any $a>0$
\[
   \P(M_\tau\ge a)=[\cosh(\sqrt{2}a)]^{-1},
\]
hence, by using \cite[Formula 860.531]{Dw},
\begin{eqnarray*}
 \E M_\tau^2 &=& \int_0^\infty 2\, a\,  \P(M_\tau\ge a) \, da 
 = \int_0^\infty \frac{2a}{\cosh(\sqrt{2}a)}\  da
 \\ 
 &=& \int_0^\infty \frac{x}{\cosh(x)}\  dx \approx 1.832.
\end{eqnarray*}
On the other hand,
\[
   \E M_\tau^2= \int_0^\infty \E M_t^2\  e^{-t}\, dt 
   = \int_0^\infty \E M_1^2 \, t \,  e^{-t} dt
   =\E M_1^2.
\]
We conclude that
\[
  E_2= \E\theta^{-1}= \E M_1^2=\E M_\tau^2 \approx 1.832.
\]
Thus numerical bound from \eqref{ce1e2} becomes $\C\le 2\sqrt{1.832}\approx 2.7$.

\subsection{Oscillation lower bound}
\label{ss:olb}

Fix an arbitrary $x>0$ (to be optimized later on).  Let $n$ be a positive integer 
and let denote $\eps:= x n^{-1/2}$.
Let us split the interval $[0,1]$ into $n$ intervals 
$\Delta_j:=[j/n, (j+1)/n]$ of length $n^{-1}$. Let
\[
   Y_j:= \left( \max_{s\in \Delta_j} W(s) - \min_{t\in \Delta_j} W(t) -2\eps\right)_+
   = \left(W(t_j)-W(s_j)-2\eps\right)_+ 
\]
where $s_j$, $t_j$ are the points where the maximum and the minimum of $W$ are attained.
Notice that by the standard properties of Wiener process (self-similarity, independence 
and stationarity of increments) the variables $Y_j$
are independent and identically distributed with $n^{-1/2} \, Y_x$, where
\[
   Y_x:= \left( \max_{0\le s \le 1} W(s) - \min_{0\le t\le 1} W(t) -2x \right)_+ \ge 0.
\]
Take any function $h\in \CC[0,1]$ such that $||h-W||_1\le \eps$. We have
\begin{eqnarray*}
h(s_j)-h(t_j) &\ge& W(s_j)-\eps - (W(t_j)+\eps) = W(s_j) -(W(t_j)- 2\eps;
\\
|h(s_j)-h(t_j)| &\ge& \left( W(s_j) -W(t_j)- 2\eps \right)_+ =Y_j. 
\end{eqnarray*}
Furthermore, by H\"older inequality,
\begin{eqnarray*}
|h|_1^2 &=& \int_0^1 h'(t)^2 dt =\sum_{j=0}^{n-1} \int_{\Delta_j} h'(t)^2 dt 
\\
&\ge& \sum_{j=0}^{n-1} \int_{s_j}^{t_j} h'(t)^2 dt
\ge \sum_{j=0}^{n-1} \frac{ \left(\int_{s_j}^{t_j} |h'(t)| dt\right)^2}
  {|s_j-t_j|}
\\    
&\ge& \sum_{j=0}^{n-1} \frac{ \left| h(s_j)-h(t_j)\right|^2}
  {|s_j-t_j|} 
  \ge n \sum_{j=0}^{n-1}  Y_j^2 =  \sum_{j=0}^{n-1} \ [Y^{(j)}]^2,
\end{eqnarray*}
where $Y^{(j)}$ are i.i.d. copies of $Y_x$.
It follows that
\[
  \iw(1,\eps)^2 
  \ge  \sum_{j=0}^{n-1}\ [Y^{(j)}]^2 , 
\]
thus
\[
  \eps^2 \iw(1,\eps)^2 
  \ge  \eps^2  \sum_{j=0}^{n-1}\, [Y^{(j)}]^2 =
   x^2 n^{-1}  \sum_{j=0}^{n-1}\ [Y^{(j)}]^2   
\]

Since $\eps^2\, \iw(1,\eps)^2\toprob \C^2$ by \eqref{toc1}, and
by the law of large numbers
$n^{-1}  \sum_{j=0}^{n-1}  \, [Y^{(j)}]^2\toprob \E Y_x^2$,
we infer that
\[
  \C^2\ge x^2 \E Y_x^2 >0.
\]
Let us explore what does it mean numerically. Recall that
$Y_x=(R-2x)_+$ where $R$ is the range of Wiener process on the unit interval of time.
According to \cite[Formula 1.15.4(1)]{BS}, $R$ has the following distribution function,
\[
   \P(R\le y) =1 +4\sum_{k=1}^\infty (-1)^k \, \Erfc\left(\frac{ky}{\sqrt{2}}\right),
\]
where $\Erfc(x):=\tfrac{2}{\sqrt{\pi}}\int_x^\infty e^{-u^2} du$.
It follows that $R$ has a density
\[
  p_R(y) = 4 \sqrt{2/\pi} \sum_{k=1}^\infty (-1)^{k+1}  k^2 \exp\left\{-k^2y^2/2\right\}.
\]
Then
\begin{eqnarray*}
  \E Y_x^2 &=& \int_{2x}^\infty (y-2x)^2 p_R(y) dy
  = 4 \sqrt{2/\pi} \sum_{k=1}^\infty (-1)^{k+1}  k^2  
    \int_{2x}^\infty (y-2x)^2  \exp\left\{-k^2y^2/2\right\} dy
\\
&=&  4 \sqrt{2/\pi} \sum_{k=1}^\infty (-1)^{k+1}  
   \left[ (4k x^2 +\tfrac 1k) \sqrt{2\pi}\ \widehat{\Phi}(2kx)-2x \exp\{-2k^2x^2\}  \right],
\end{eqnarray*}
where $\widehat\Phi(\cdot)$ stands for the tail of standard normal distribution.
The series is so rapidly decreasing in $k$ that mostly the first term $k=1$ is relevant.
By making numerical optimization in $x$ we find the best values near $x\approx 0.5$
where we obtain $x^2 \E Y_x^2\approx 0.145$, thus
\[
  \C\ge x \sqrt{\E Y_x^2} \approx 0.381.
\] 

\section{Markovian pursuit} 
\label{s:pursuit}
\setcounter{equation}{0}

In practice, it is often necessary to build an approximation to the process
adaptively, i.e. in real time, because as parameter (viewed as time) 
advances, we may only know the trajectory of approximated  process 
(Wiener process in our case) up to the current instant. 

In this setting, approximation problem becomes a {\it pursuit problem}.
One may think of a person walking with a dog along one-dimensional path.
Wiener process represents the disordered dog's walk, while the person 
tries to keep the dog  on a leash of given length 
by moving with finite speed and expending minimal energy per unit 
of time.

We will construct an absolutely continuous approximating process $x(t)$ 
such that
\[
   |x(t)- W(t)| \le 1, \qquad t\in \R. 
\]
In view of Markov property of Wiener process, the reasonable strategy is to 
determine the derivative $x'(t)$ as a function of the distance to the target,
\[
  x'(t) := b(x(t)-W(t)) 
\]
where the odd function $b(\cdot)$ defined on $[-1,1]$ explodes to $-\infty$ at $1$
and to  $+\infty$ at $-1$, thus preventing the exit of $x(t)- W(t)$ from the 
corridor $[-1,1]$.
 
In this section we will find an optimal speed function $b(\cdot)$. As a by-product, 
this will give us another upper bound on $\C$.
 
Let $X(t):= x(t)-W(t)$. In our setting, $X(\cdot)$ is a diffusion process satisfying
a simple SDE
\[
   dX= b(X)dt - dW(t).
\]
Recall some important facts about the univariate diffusion, cf. \cite[Chapter IV.11]{B},
\cite[Chapter 2]{BS}.

Let
\[
   B(x):= 2 \int^x b(s) ds.
\]
We stress that $B$ is defined as an indefinite integral, i.e. up to an additive constant.
Let 
\[
     p_0(x) :=  e^{B(x)}.
\]
If
\be \label{entranceboard}
    \int_{-1} \frac{dx}{p_0(x)} = \int^1 \frac{dx}{p_0(x)} =\infty,
\ee
then the boundaries $\pm 1$ belong to the {\it entrance} type and do not belong to the {\it exit} 
type in Feller classification. This means that diffusion remains inside the corridor $[-1,1]$ all 
along its infinite horizon of life.
Moreover, the normalized density
\[
   p(x) := Q^{-1} p_0(x),
\]
where $Q:=\int_{-1}^{1} p_o(x) dx$ is the normalizing factor, is the density of the stationary
distribution of diffusion $X$ considered as a mixing Markov process.
We conclude that at large intervals of time $[0,T]$
\begin{eqnarray*}
  \int_0^T x'(t)^2 dt &=&  \int_0^T b(X(t))^2 dt \sim T \int_{-1}^1 b(x)^2 p(x) dx 
  \\
  &=& T \int_{-1}^1 \left(\tfrac{\ln p}{2}\right)'(x)^2 p(x) dx 
  = \frac T4 \int_{-1}^1 \frac{p'(x)^2}{p(x)}\, dx 
  := \frac{I(p)}{4} \ T,  
  \qquad \textrm{as } T\to\infty,  
\end{eqnarray*}
where, quite unexpectedly, {\it Fisher information} $I(p)$ shows up in the asymptotics.

The next step is to solve the variational problem
\[
   \min\left\{ I(p) \ \big| \int_{-1}^1 p(x) dx = 1\right\}
\]
over the set of even densities concentrated on $[-1,1]$ and satisfying  
\eqref{entranceboard}.

Although the solution is well known (see the references below), we recall 
it here for completeness. For Lagrange variation (with one indefinite 
multiplier) we have, for any smooth function $\delta(\cdot)$ supported by $(-1,1)$
\begin{eqnarray*}
  &&I(p+\delta) - \lambda^2 \int_{-1}^1(p+\delta)(x)dx - \left(I(p)-\lambda^2 \int_{-1}^1 p(x)dx \right)
  \\
  &=& \int_{-1}^1  \left[ \frac{(p'(x)+\delta'(x))^2}{p(x)+\delta(x)}- \frac{p'(x)^2}{p(x)}
  -\lambda^2\delta(x)\right] dx
  \\
  &\sim& \int_{-1}^1  \left[ \frac{2p'(x)\delta'(x)}{p(x)}- \frac{p'(x)^2\delta(x)}{p(x)^2}
  -\lambda^2\delta(x)\right] dx
  \\
  &=& \int_{-1}^1  \left[ -2 \left(\frac{p'}{p}\right)'(x)- \frac{p'(x)^2}{p(x)^2}
  -\lambda^2\right] \delta(x) dx, \qquad \textrm{as } \delta\to 0.
\end{eqnarray*}
We obtain variational equation,
\[
  2 \left(\frac{p'}{p}\right)'(x) + \frac{p'(x)^2}{p(x)^2}+\lambda^2 =0.
\]
By letting $\beta(x):=(\ln p)'(x)= \frac{p'}{p}(x)$, we have
$
   2\beta'+\beta^2+ \lambda^2=0
$
which yields $\tfrac{d\beta}{\beta^2+\lambda^2}=\tfrac{-dx}{2}$ and
\[
  \frac{1}{\lambda} \arctan (\beta/\lambda) = c- \frac{x}{2}.
\]
Since by symmetry $p'(0)=0$, we have $\beta(0)=0$, thus $c=0$ and
\[
  \frac{1}{\lambda} \arctan (\beta/\lambda) = - \frac{x}{2},
\]
or, equivalently,
\[
   \beta(x) = -\lambda \tan(\lambda x/2).
\]
Furthermore, since $p(\cdot)$ should vanish on the boundary $\pm 1$, $\beta$ should explode, i.e.
$\beta(\pm 1)= \mp\infty$, we obtain $\lambda=\pi$. Hence,
\[
   \beta(x) = - \pi \tan(\pi x/2).
\]
Next,
\begin{eqnarray*}
  \ln p(x) &=& \int (\ln p)'(x) dx = \int \beta(x) dx
  \\
  &=& - \pi \int \tan(\pi x/2) dx = c + 2 \ln\cos(\pi x/2).
\end{eqnarray*}
Therefore, the density of the optimal invariant measure is
\[
  p(x)= c_1 \, \cos^2(\pi x/2).
\]
Since 
\[
  \int_{-1}^1 \cos^2(\pi x/2)\, dx =  \frac 12 \int_{-1}^1 \left(1+ \cos(\pi x)\right) dx =1,  
\]
we have $c_1=1$, thus
\[
  p(x)= \cos^2(\pi x/2).
\]
This distribution, as a minimizer of Fisher information on an interval, can be also found
in \cite{Hub,Lev}, \cite[p.63]{Shev}.  Luckily for us,  this $p(\cdot)$ satisfies
\[
  \int^1 \frac{dx}{p(x)} = \int_{-1} \frac{dx}{p(x)} =\infty,
\]
so that \eqref{entranceboard} is satisfied, and we really have entrance boards
for the optimal regime.  

It remains now to calculate the optimal Fisher information,
\begin{eqnarray*}
I(p) &=& \int_{-1}^1 \frac{p'(x)^2}{p(x)} \ dx = \int_{-1}^1 b(x)^2 p(x)\  dx
\\
 &=&  \pi^2 \int_{-1}^1  \tan^2(\pi x/2)\ \cos^2(\pi x/2) dx   
 =  \pi^2 \int_{-1}^1  \sin^2(\pi x/2) dx 
 \\
 &=&\pi^2\ .
\end{eqnarray*}
The optimum is attained at the speed strategy
\[
   b(x) =\frac{\beta(x)}{2} = - \frac{\pi}{2}\ \tan(\pi x/2);
\]
see an example of its implementation in Fig. \ref{fig:mp}. 

{\it We conjecture that the provided algorithm is the optimal one 
in the entire class of adaptive algorithms.}  

%%%%%%%%%%%%%%%%%%%%%%%%%%%%%%%%%%%%%%%%%%%%%%%%%%%%%%%%%%%%%%%%%%%%%%
%%% Markovian pursuit picture
\begin{center}
\begin{figure} [ht]

\begin{center}
\includegraphics[height=1.7in,width= 3.5in]{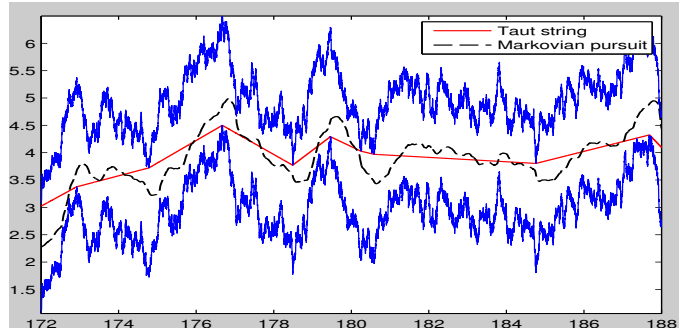}
\end{center}

\caption{Optimal Markovian pursuit accompanying Wiener process}
\label{fig:mp}
\end{figure}
\end{center}

%%%%%%%%%%%%%%%%%%%%%%%%%%%%%%%%%%%%%%%%%%%%%%%%%%%%%%%%%%%%%%%%%%%%%

As a by-product we get a bound for non-Markovian asymptotic 
bound,
\[
   \C \le \frac{I(p)^{1/2}}{2} = \frac{\pi}{2}\ ,
\] 
surprisingly the same as the bound obtained in Subsection
\ref{ss:gkbound} by completely different method.

\section{Some simulation results}
\label{s:simulation}
\setcounter{equation}{0}

%Fig. \ref{fig:hts}.

%%%%%%%%%%%%%%%%%%%%%%%%%%%%%%%%%%%%%%%%%%%%%%%%%%%%%%%%%%%%%%%%%%%%%%
%%% Histogram for taut string energy constant
\begin{center}
\begin{figure} [ht]

\begin{center}
\includegraphics[height=2in,width= 4in]{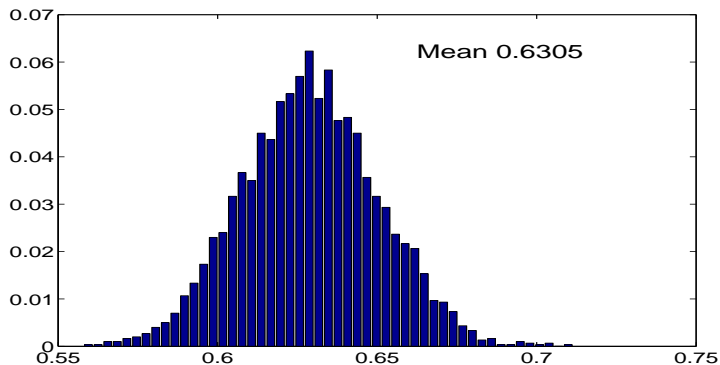}
\end{center}

\caption{Histogram for taut string energy constant $\C$.}
\label{fig:hts}
\end{figure}
\end{center}

%%%%%%%%%%%%%%%%%%%%%%%%%%%%%%%%%%%%%%%%%%%%%%%%%%%%%%%%%%%%%%%%%%%%%

%Fig. \ref{fig:hmp}.
We simulate a path of Wiener process $W(t)$ on the interval $\left[0,T\right]$ with $N+1$ uniformly distributed knots, i.e. $\left(iT/N,W(iT/N)\right)$, $i=0,1,...,N$. For each simulated path we consider the tube of radius $1$ and compute the discrete taut string with fixed end and Markovian pursuit functions.
\par Let us describe the computation of the discrete Markovian pursuit. The stochastic differential equation
\begin{equation*}
     h'(t):=-\frac{\pi}{2} \tan \left(\frac{\pi}{2}\left(h(t)-W(t)\right)\right).
\end{equation*}
is discretized with a backward finite difference method which result in the equations
\begin{equation} \label{1}
    \frac{h\left(\frac{iT}{N}\right)-h\left(\frac{(i-1)T}{N}\right)}{\frac{T}{N}}
    =-\frac{\pi}{2}\tan\left(\frac{\pi}{2}\left(h\left(\frac{(i-1)T}{N}\right)
    -W\left(\frac{(i-1)T}{N}\right)\right)\right)
\end{equation}
for $i=1,...,N$ together with the initial condition $h(0)=0$. If $h(iT/N)$ is 
close to the boundaries $W(iT/N)\pm 1$ we might experience numerical 
instability at subsequent time points due to $\tan(\frac{\pi x}{2})
\rightarrow\pm\infty$ when $x\rightarrow\pm 1^{\mp}$. To avoid this, we constrain 
$h(iT/N)$ to the interval
\begin{displaymath}
    \left[W\left(\frac{iT}{N}\right)-0.99,W\left(\frac{iT}{N}\right)+0.99\right].
\end{displaymath}
If the computed value of $h(iT/N)$, given by (\ref{1}), is outside this interval 
we set $h(iT/N)$ equal to the nearest endpoint of the interval.
\par From the discrete taut string and Markovian pursuit functions, piecewise linear functions $h(t)$ on $\left[0,T\right]$ are constructed by joining consecutive pair of the computed knots
\begin{displaymath}
    \left(iT/N,h(iT/N)\right), i=0,1,...,N
\end{displaymath}
with line segments. The (square root of) energy $|h|_T$ can then be computed according to
\begin{equation*}
      |h|_T:=\frac{1}{T^{1/2}} \left(\int_0^T h'(t)^2 dt \right)^{1/2}
      = \frac{N^{1/2}}{T} \left(\sum^{N}_{i=1}\left(h\left(\frac{iT}{N}\right)
      -h\left(\frac{(i-1)T}{N}\right)\right)^{2} \right)^{1/2}.
\end{equation*}
\par We simulated 3000 independent paths of the Wiener process on the interval $\left[0,T\right]$ with $T=1000$ and $N=1000000$. For this sample, the mean of $|h|_T$ for the taut string with fixed end was approximately $0.63$, see Fig. \ref{fig:hts} for the corresponding histogram. Using the same sample of paths, the mean of $|h|_T$ for the Markovian pursuit was approximately $1.62$, see Fig. \ref{fig:hmp}, which is reasonably close to the theoretical constant $\frac{\pi}{2}\approx 1.57$ when $T\rightarrow \infty$.

%%%%%%%%%%%%%%%%%%%%%%%%%%%%%%%%%%%%%%%%%%%%%%%%%%%%%%%%%%%%%%%%%%%%%%
%%% Histogram for Markovian pursuit energy constant
\begin{center}
\begin{figure} [ht]

\begin{center}
\includegraphics[height=2in,width=4in]{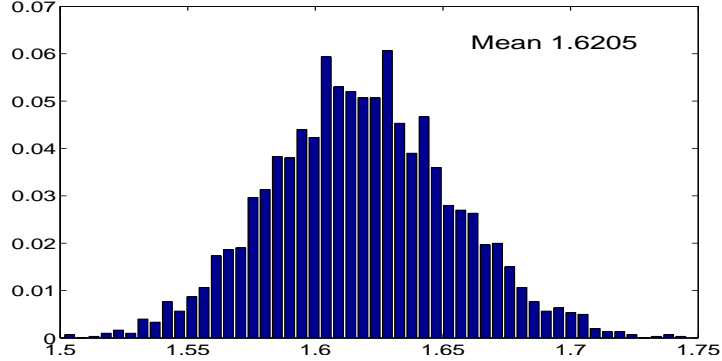}
\end{center}

\caption{Histogram for Markovian pursuit energy constant.}
\label{fig:hmp}
\end{figure}
\end{center}

%%%%%%%%%%%%%%%%%%%%%%%%%%%%%%%%%%%%%%%%%%%%%%%%%%%%%%%%%%%%%%%%%%%%%

\section{Some related problems}
\label{s:misc}
\setcounter{equation}{0}

By different reasons, taut strings and similar objects already appeared, 
sometimes implicitly, in probabilistic problems. Therefore, it seems 
reasonable to place our results into a historical context.  

\subsection{Strassen's functional law of the iterated logarithm}

Strassen's functional law of the iterated logarithm \cite{Str,Lif} claims
that
\[
   \limsup_{T\to \infty} \inf_{|h|_1\le 1} 
   \left\| \frac{W(\cdot T)}{\sqrt{2T\ln\ln T}} -h \right\|_1
   =0
   \qquad \textrm{a.s.}
\]
Grill and Talagrand \cite{Gr2,Tal} independently established the optimal convergence
rate in this law by proving that for some finite positive constants $c_1$,
$c_2$ it is true that
\[  c_1  < 
   \limsup_{T\to \infty} \ (\ln\ln T)^{2/3} \inf_{|h|_1\le 1} 
   \left\| \frac{W(\cdot T)}{\sqrt{2T\ln\ln T}} -h \right\|_1
   < c_2
   \qquad \textrm{a.s.}
\]
Due to scaling properties of the function $\iw(\cdot,\cdot)$, the latter statement
just means that
\begin{eqnarray*}
&&\limsup_{T\to\infty} \frac{\iw(T,c_1(2T)^{1/2}(\ln\ln T)^{-1/6})}{(2\ln\ln T)^{1/2}} >1,
\\
&&\limsup_{T\to\infty} \frac{\iw(T,c_2(2T)^{1/2}(\ln\ln T)^{-1/6})}{(2\ln\ln T)^{1/2}} <1.
\end{eqnarray*}
Grill \cite{Gr1} and Griffin and Kuelbs \cite{GK} showed a similar $\liminf$ result
asserting that for some $c_3>0$ and any $c_4>\tfrac{\pi}{8}$ it is true that
\[  c_3  < 
   \liminf_{T\to \infty} \ (\ln\ln T) \inf_{|h|_1\le 1} 
   \left\| \frac{W(\cdot T)}{\sqrt{2T\ln\ln T}} -h \right\|_1
   < c_4
   \qquad \textrm{a.s.}
\]
which means, in our notations, that
\begin{eqnarray*}
&&\liminf_{T\to\infty} \frac{\iw(T,c_3(2T)^{1/2}(\ln\ln T)^{-1/2})}{(2\ln\ln T)^{1/2}} >1,
\\
&&\liminf_{T\to\infty} \frac{\iw(T,c_4(2T)^{1/2}
(\ln\ln T)^{-1/2})}{(2\ln\ln T)^{1/2}} <1.
\end{eqnarray*}

We may conclude that the tubes relevant to the Strassen law are much larger 
than ours. Accordingly, the respective minimal energy is much lower.

\subsection{$L_1$-optimizers: lazy functions}

By its definition, the taut string is the unique minimizer of
$\int_0^T h'(t)^2 dt$ among the functions whose graphs pass through
the corresponding tube. It is well known, however, that when both 
endpoints are fixed, the taut string also is a minimizer for any 
functional $\int_0^T \varphi(h'(t)) dt$ whenever $\varphi(\cdot)$ 
is a convex function. In the recent literature, much attention 
was payed to the case $\varphi(x)=|x|$, i.e. to the minimization 
of variation
\[
   \V(h):= \int_0^T |h'(t)| dt,
\]  
see \cite{LoMil,Mil}. Notice that since $|\cdot|$ is not a {\it strictly} 
convex function, the corresponding variational problem typically has 
{\it many} solutions. Moreover, since the variation is well defined not 
only on 
absolutely continuous functions, the natural functional domain for 
optimization becomes wider. In \cite{Mil} another minimizer of $\V(h)$ is 
described in detail, a so called "lazy function". When possible, 
this function remains constant; otherwise, it follows the boundary of 
the tube. Notice that lazy function need not be absolutely continuous;
it only has a bounded variation.

For the case when the tube is constructed around a sample path 
of Wiener process, \cite{Mil} suggests a description of lazy function
as an inverse to appropriate subordinator. Although the taut string 
and lazy function both solve the same variational problem, 
the relations between them are yet to be clarified.
%%apparently it does not lead to the description
%%of the taut string.

\subsection{A related discrete applied problem}

We describe in this section an interesting discrete applied problem
coming from information transmission that turns out to be related
with discrete taut string construction. This problem actually was
an initial motivation for our research.
 
Consider the following information transmission unit represented on
Fig. \ref{fig:channel}.

%%%%%%%%%%%%%%%%%%%%%%%%%%%%%%%%%%%%%%%%%%%%%%%%%%%%%%%%%%%%%%%%%%%%%%
%%% Channel picture

{\unitlength=0.4mm

\begin{center}
\begin{figure} [ht]

\begin{center}
\begin{picture}(210, 70)

%% Entrance

\put(0,30){\vector(1,0){10}}

\put(10,17){\line(0,1){25}}
\put(50,17){\line(0,1){25}}
\put(10,17){\line(1,0){40}}
\put(10,42){\line(1,0){40}}

\put(14,35){\tiny Entrance}
\put(14,27){\tiny information}
\put(14,19){\tiny flow}

\put(50,30){\vector(1,0){25}}
\put(60,17){{$\scriptstyle S$}}

%% Channel

\put(75,30){\circle*{3}}
\put(75,30){\vector(1,0){75}}

\put(150,17){\line(0,1){25}}
\put(192,17){\line(0,1){25}}
\put(150,17){\line(1,0){42}}
\put(150,42){\line(1,0){42}}

\put(154,33){\tiny Transmission}
\put(154,25){\tiny Channel}

\put(192,30){\vector(1,0){20}}
\put(200,17){{$\scriptstyle C$}}

%% Buffer

\put(75,30){\vector(1,1){22}}

\put(97,52){\line(0,1){20}}
\put(130,52){\line(0,1){20}}
\put(97,52){\line(1,0){33}}
\put(97,72){\line(1,0){33}}

\put(100,62){\tiny Buffer}
\put(100,57){\tiny of size B}

\put(130,52){\vector(2,-1){20}}

%% Loss

\put(75,30){\vector(1,-1){30}}
\put(110,0){\tiny Loss $L$}
\end{picture}
\end{center}

\caption{Information transmission unit.}
\label{fig:channel}
\end{figure}
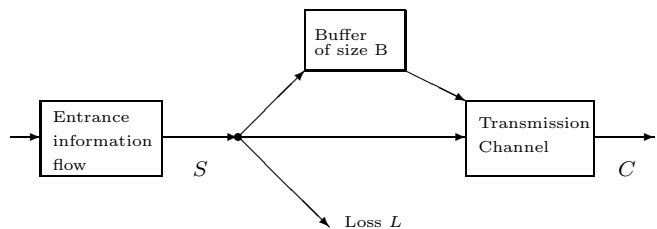
\end{center}

}%% end of unitlength

%%%%%%%%%%%%%%%%%%%%%%%%%%%%%%%%%%%%%%%%%%%%%%%%%%%%%%%%%%%%%%%%%%%%%%%

We have the discrete time count: $j=1,2,3,\dots$. At each time
$j$ an amount of information $S_j$ enters the system and should
be transmitted through a channel. The channel's {\it transmission capacity}
$C_j$ varies upon the time (for example, the channel may be shared
with other tasks external to our information flow). We are interested in
the situation when the channel capacity is insufficient for transmission, 
i.e. $S_j\ge C_j$. We may place a part of the excessive information 
into a {\it buffer} of given size $B$ and drop (loose) the remaining part.
Let $L_j$ denote the loss size. This variable remains under our 
partial control, yet within buffer size limitations.
Let $B_j$ denote the amount of information stocked in the buffer. One
necessarily has 
\be \label{BjB}
     0\le B_j\le B. 
\ee
Given $\varphi: [0,1] \mapsto \R_+$ -- an increasing convex {\it penalty 
function}, define the {\it penalty functional} by the formula
\[
   F := \sum_{j=1}^n \varphi\left(\tfrac{L_j}{S_j}\right)\, S_j.
\] 
Given $(S_j),(C_j)$, and $B$, we are interested to minimize $F$ by controlling
$L_j$. It is important to notice that eventual non-linearity of $\varphi(\cdot)$
is a natural feature because a small loss of information, e.g. of a graphical one,
is more likely to be repaired by interpolation methods than a large loss.

The process of system work may be analyzed through the buffer balance equation.
We clearly have

\[
   B_j = B_{j-1} +\left(S_j-C_j-L_j \right).
\]
Therefore,
\[
  B_k = \sum_{j=1}^k \left(S_j-C_j\right) - \sum_{j=1}^k  L_j.
\]
Now the buffer bounds \eqref{BjB} mean that
\[
   \sum_{j=1}^k \left(S_j-C_j \right) -B 
   \le \sum_{j=1}^k  L_j 
   \le \sum_{j=1}^k \left(S_j-C_j\right).
\]
In other words, the accumulated loss curve $\sum_{j=1}^k  L_j$ must go within a (random) 
band of fixed width $B$, see Fig. \ref{fig:bb}. Note that on the picture we use the
{\it operational time}, i.e. the accumulated entrance flow  $\sum_{j=1}^k  S_j$ instead of
the usual time $j$.

Therefore the minimum
\[
  F=F(L) = \sum_{j=1}^n \varphi \left(\tfrac{L_j}{S_j}\right)\, S_j
  = \int_0^{S} \varphi(L'(s)) ds \searrow min 
\]
where $S:=\sum_{j=1}^n S_j$,
is attained at the corresponding taut string.

The greedy FIFO strategy ("first in, first out") which consists in 
keeping the buffer full all the time corresponds to the accumulated loss graph
going along the lower border of the admissible corridor. It is usually non-optimal 
at all.

Assuming that information excess  $S_j-C_j$ is a sequence of identically distributed
random variables, we arrive to the problem of construction
of taut string accompanying sums of i.i.d. random variables with positive drift.

%%%%%%%%%%%%%%%%%%%%%%%%%%%%%%%%%%%%%%%%%%%%%%%%%%%%%%%%%%%%%%%%%%%%%%
%%% Unit work picture

{\unitlength=0.4mm

\begin{center}
\begin{figure} [ht]

\vskip 1cm

\begin{center}

\begin{picture}(260, 130)

%% Axes
\put(0,0){\vector(1,0){240}}
\put(0,0){\vector(0,1){140}}

{\thicklines 
%% Upper bound
\put(0,0){\line(2,3){40}}
\put(40,60){\line(3,1){60}}
\put(100,80){\line(1,2){20}}
\put(120,120){\line(1,0){40}}
\put(160,120){\line(4,1){50}}

%% Lower bound
\put(0,0){\line(2,1){40}}
\put(40,20){\line(3,1){60}}
\put(100,40){\line(1,2){20}}
\put(120,80){\line(1,0){40}}
\put(160,80){\line(4,1){50}}

%% sum L
\put(0,0){\line(1,1){40}}
\put(40,40){\line(1,0){60}}
\put(100,40){\line(1,3){20}}
\put(120,100){\line(4,1){40}}
\put(160,110){\line(5,2){50}}
} %% end thicklines

%% vertical lines
\put(40,0){\line(0,1){60}}
\put(100,0){\line(0,1){80}}
\put(120,0){\line(0,1){120}}
\put(160,0){\line(0,1){120}}
\put(210,0){\line(0,1){132}}

\put(153,140){\tiny Accumulated information excess $\sum (S_j-C_j)$}
\put(213,90){\tiny FIFO strategy (full buffer)}

\put(213,120){\tiny Accumulated loss $\sum L_j$}

\put(203,-10){\tiny Accumulated entrance flow $\sum S_j$}

%% X-axe legend
\put(15,-8){\tiny $\scriptstyle S_1$ }
\put(65,-8){\tiny $\scriptstyle S_2$ }
\put(105,-8){\tiny  $\scriptstyle S_3$ }
\put(135,-8){\tiny $\scriptstyle S_4$ }
\put(180,-8){\tiny $\scriptstyle S_5$ }

\put(0,0){\vector(1,0){40}}
\put(40,0){\vector(-1,0){40}}

\put(40,0){\vector(1,0){60}}
\put(100,0){\vector(-1,0){60}}

\put(100,0){\vector(1,0){20}}
\put(120,0){\vector(-1,0){20}}

\put(120,0){\vector(1,0){40}}
\put(1160,0){\vector(-1,0){40}}

\put(160,0){\vector(1,0){50}}
\put(210,0){\vector(-1,0){50}}

%% B
\put(136,80){\vector(0,1){40}}
\put(136,120){\vector(0,-1){40}}
\put(138,90){\tiny $\scriptstyle B$ }

%% B_j
\put(120,100){\vector(0,1){20}}
\put(120,120){\vector(0,-1){20}}
\put(122,110){\tiny ${\scriptstyle B_j}$ }
\end{picture}
\end{center}

\vskip 1 cm

\caption{Transmission unit  work graph.}
\label{fig:bb}
\end{figure}
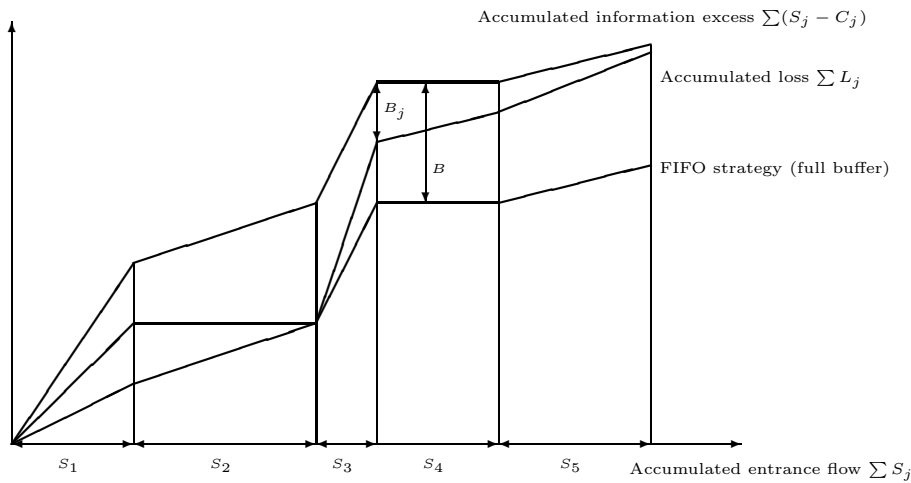
\end{center}

}%% end of unitlength

%%%%%%%%%%%%%%%%%%%%%%%%%%%%%%%%%%%%%%%%%%%%%%%%%%%%%%%%%%%%

\section{Final remarks}
\label{s:concl}

After energy evaluation for the taut strings accompanying Wiener process,
many similar questions arise. 

Within the same framework, it would be natural 
to study more general functionals of taut string by replacing energy with the functionals
$\int_0^T \varphi(h'(t))\, dt$ with more or less general convex function 
$\varphi$. Since in the long run the derivative of accompanying taut string 
seems to be close to an ergodic stationary process, characterized by
its invariant distribution, say $\mu$, it is natural to expect that an ergodic
theorem holds in the form
\[
  \frac 1T \int_0^T \varphi(h'(t))\, dt \toalsur \int_\R \varphi(x)\, \mu(dx) 
\]  
thus extending our Theorem \ref{t:alsur}.

It is natural to explore the energy and similar characteristics of the taut strings
accompanying other processes. The fractional Brownian motion is the first
obvious candidate, but in general, the class of processes with stationary increments
including non-Gaussian L\'evy processes seems to be a natural framework for
this extension. Notice that the energy we handled here has a special relation to
Wiener process, because it coincides with the squared norm of the corresponding
reproducing kernel. This makes our proofs easier but we hope that handling
energy for other processes is still possible.

One can also modify the form of the tube that defines required closeness between
the string and the process. For example if we measure the distance between the string 
and the process in $L_2$-norm instead of the uniform one, then all calculations
become explicit, and the analogue of constant $\C$ may be calculated precisely.
This will be a subject of forthcoming publication. 
\bigskip

{\bf Acknowledgement.}\ The authors are much indebted to Professor Natan
Kruglyak for providing strong motivation for this research and for 
useful discussions. 
They are also grateful to Z.~Kabluchko and E.~Schertzer for enlightening 
comments.
   
The first named author work was supported by grants RFBR 13-01-00172 
and SPbSU 6.38.672.2013.

%%%%%%%%%%%%%%%%%%%%%%%%%%%%%%%%%%%%%%%%%%%%%%%%%%%%

\end{document}